\documentclass[a4paper,10pt]{article}
\usepackage[english]{babel}
\usepackage[left=44mm,right=40mm,top=60mm,bottom=52mm]{geometry}
\usepackage[symbol]{footmisc}
\usepackage{amsmath}
\usepackage{amssymb}
\usepackage{amsthm}
\usepackage{indentfirst}

\newtheorem{theorem}{Theorem}

\newtheorem{lemma}{Lemma}

\newtheorem{remark}{Remark}

\begin{document}
	\section*{On some generalizations of the property $B$-problem \\ of an $n$-uniform hypergraph\footnote[1]{This work was supported by the Russian Foundation for Basic Research under grant 18-01-00355 and by the program "Leading Scientific Schools" under grant NSh-6760.2018.1.}}
	\begin{flushright}
		\textbf{YU. A. DEMIDOVICH}
		
		{\it Moscow Institute of Physics and Technology\\
			yuradem9595@mail.ru}
	\end{flushright}
	{\small \textbf{Key words:} Uniform hypergraphs, property $B,$ simple hypergraphs.
		
		{\small \begin{center}
				\textbf{Abstract}
			\end{center}

			The extremal problem of hypergraph colorings related to Erd{\H{o}}s--Hajnal property $B$-problem is considered. Let $k$ be a natural number. The problem is to find the value of $m_k(n)$ equal to the minimal number of edges in an $n$-uniform hypergraph not admitting $2$-colorings of the vertex set such that every edge of the hypergraph contains at least $k$ vertices of each color. In this paper we obtain new lower bounds for $m_k(n).$}}
		
		\section{Introduction and history}
		
		In this work we consider a famous problem concerning vertex colorings of uniform hypergraphs. First we will give some basic definitions. A \textit{hypergraph} is a pair $H = \left( V, E \right),$ where $V$ is a finite set whose elements are called \textit{vertices} and $E$ is a family of subsets of $V,$ called the \textit{edges}. A hypergraph is said to be $n$-\emph{uniform} if each of its edges contains exactly $n$ vertices.
		
		One of the classical extremal problems of a hypergraph theory is the property $B$-problem. We say that a hypergraph has property $B$ if there is a two-coloring of $V$ such that no edge is monochromatic. The problem is to find the quantity $m(n)$ which is the minimum possible number of edges of an $n$-uniform hypergraph that does not have property $B$. This question was first posed in the paper of Erd\H{o}s and Hajnal (see \cite{EH}). Erd{\H{o}}s himself (see \cite{E1}, \cite{E2}) obtained the following asymptotical bounds for $m(n):$
		\begin{equation}
		2^{n-1} \leqslant m(n) \leqslant \frac{e \ln2}{4}n^2 2^n (1 + o(1)).
		\end{equation}
		
		The upper bound remains the same. The lower bound was refined in several works (see surveys \cite{K}, \cite{RaiSh}). The best-known result is due to Radhakrishnan and Srinivasan (see \cite{RS}). They proved that
		$$
		m(n) \geqslant (0.1)2^n \sqrt{\frac{n}{\ln n}}.
		$$
		
		There exist a lot of different generalizations of the Erd{\H{o}}s--Hajnal problem (see \cite{K}). One of them was suggested by Shabanov (see \cite{Sh1}). Let $k$ be a natural number. We say that a hypergraph $H = \left( V, E \right)$ has \textit{property} $B_k,$ if there exists such a two-coloring  of $V$ that every edge contains at least $k$ vertices of each color. Similarly to $m(n)$ we define the quantity $m_k(n)$ equal to the minimum number of edges of an $n$-uniform hypergraph that does not have property $B_k.$ Clearly property $B_1$ is property $B$ and $m_1(n)$ is $m(n).$ By means of the  probabilistic method one can obtain the following lower bound for $m_k(n)$, which coincides with $\left( 1 \right)$ in case of $k = 1:$
		$$
		m_k(n) \geqslant \frac{2^{n-1}}{\sum_{j = 0}^{k - 1}\binom{n}{j}}.
		$$
		In works \cite{Sh1}, \cite{Sh2} Shabanov proved that if $k = k(n) = o(n/ \ln n),$ then there exists a function $\psi(n),$ which depends only on the type of the function $k(n)$ and tends to $1$ as $n$ approaches infinity, such that
		\begin{equation}
		m_k(n) \leqslant \frac{e\ln 2}{4}n^2 \frac{2^n}{\sum_{i = 0}^{k - 1}\binom{n}{i}}\psi(n).
		\end{equation}
		Moreover in \cite{T} Teplyakov proved that if $k = o(n),$ then $m_k(n)$ satisfies $\left( 2 \right).$
		
		Also in \cite{Sh3} Shabanov obtained the following lower bound for $m_k(n):$
		\begin{equation}
		m_k(n) = \Omega\left(\left(\frac{n}{\ln n}\right)^{1/2}\frac{e^{-k/2}}{\sqrt{2k-1}} \frac{2^{n-k}}{\binom{n}{k-1}}\right)
		\end{equation}
		when $k = k(n) = O(\ln n).$ In case when the growth of $k$ is between $\ln n$ and $\sqrt{n}$ the best lower bound is due to Rozovskaya (see \cite{R1}, \cite{R2}):
		\begin{equation}
		m_k(n) \geqslant 0.19\cdot n^{1/4}\frac{2^{n-1}}{\binom{n-1}{k-1}}.
		\end{equation}
		Recent results and surveys on this topic can be found in the papers
		\cite{RaiSh}, \cite{AkSh} --- \cite{Sh6}.
		\section{Main result}
		The main result of this paper is a new lower bound for $m_k(n).$
		\begin{theorem}
			Let $n \geqslant 30,$ $k\geqslant 2$ and
			\begin{equation}
			k \leqslant \sqrt{\frac{n}{\ln n}}.
			\end{equation}
			Then
			\begin{equation}
			m_k(n) \geqslant \frac{12}{e^{26}} \left(\frac{n}{k\ln n} \right)^{1/2} \frac{2^{n-1}}{\binom{n-1}{k-1}}.
			\end{equation}
		\end{theorem}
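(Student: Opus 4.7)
The plan is to establish the lower bound via a Radhakrishnan--Srinivasan-style random 2-colouring argument, adapted to property $B_k$. Let $H=(V,E)$ be an $n$-uniform hypergraph with $|E|$ strictly less than the right-hand side of $(6)$; I would show $H$ admits a 2-colouring with property $B_k$, which proves $m_k(n)>|E|$.

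The colouring is built in two stages. \emph{Stage 1:} each vertex is independently coloured red or blue with probability $1/2$, yielding an initial colouring $\chi_0$. \emph{Stage 2 (repair):} each vertex $v$ receives an independent uniform time $t_v\in[0,1]$; vertices are then processed in increasing time order, and $\chi(v)$ is flipped whenever $v$ lies in some edge $e$ that, at time $t_v$, has fewer than $k$ vertices of the colour opposite to $v$'s current colour, provided also $t_v\leq p$ for a parameter $p$ to be optimized. The intuition is that Stage 2 rebalances edges about to violate $B_k$, without disturbing edges that are already safe.

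Next I would fix an edge $e$ and upper-bound $\Pr[\,e \text{ violates } B_k \text{ under } \chi\,]$. This probability splits into two complementary failure modes: (i) $e$ is already bad under $\chi_0$ and Stage 2 fails to add enough vertices to its minority colour; (ii) $e$ is good under $\chi_0$ but is pulled into a bad state by Stage 2 flips triggered through other edges sharing vertices with $e$. Conditioning on the number $j<k$ of minority vertices of $e$ under $\chi_0$ (contributing factors $\binom{n}{j}2^{-n}$) and using the independence of the times $t_v$, a binomial estimate for the probability that the critical number of flips occurs should yield a per-edge bound of the form
\[
\Pr[\,e \text{ bad}\,] \;\leq\; \frac{e^{26}}{12}\cdot \sqrt{\frac{k\ln n}{n}}\cdot \frac{\binom{n-1}{k-1}}{2^{n-1}}.
\]
A union bound over the $|E|$ edges together with the hypothesis on $|E|$ gives $\Pr[\exists\, e \text{ bad}]<1$, so a valid colouring exists.

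\textbf{Main obstacle.} The delicate point is the per-edge analysis of Stage 2, where the flip rule at $v\in e$ depends on \emph{other} edges through $v$ and hence introduces correlations among the flip events on $e$. The improvement over Rozovskaya's $n^{1/4}$ factor in $(4)$ comes precisely from tightening this step: conditioning on the relative order of the times $t_v$ for $v\in e$, applying a binomial-type concentration bound to the remaining flip events, and balancing the two failure modes via the choice $p\asymp \sqrt{k\ln n/n}$ --- which is admissible thanks to hypothesis $(5)$ --- produces the $(n/(k\ln n))^{1/2}$ improvement. The assumption $n\geq 30$ and the explicit constant $12/e^{26}$ then emerge from Stirling-type numerical estimates of the subleading terms in the final calculation.
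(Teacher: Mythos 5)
Your plan follows the Radhakrishnan--Srinivasan two-stage recolouring scheme, whereas the paper proves the theorem by an entirely different route: it uses Rozovskaya's ordering criterion for property $B_k$ (a hypergraph has property $B_k$ iff there is a numeration $\sigma$ with $L_\sigma(f)\cap F_\sigma(s)=\varnothing$ for all edges $f,s$), assigns each vertex an independent uniform $X_v\in(0,1)$ as in Cherkashin--Kozik, calls an edge dense when $X_{(n-k+1)}-X_{(k)}\leqslant\frac{1-p}{2}$ with $p=2k\ln n/n$, and shows that $\mathbb{P}(\mathcal{R})+\mathbb{P}(\mathcal{F})<1$, where $\mathcal{R}$ is the event that some edge is dense and $\mathcal{F}$ is the event that some \emph{ordered pair} of non-dense edges violates the criterion. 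The difference in route would be fine in itself, but as written your argument has genuine gaps that prevent it from being a proof.

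First, the entire quantitative content is concentrated in the asserted per-edge bound
$\Pr[e\text{ bad}]\leqslant\frac{e^{26}}{12}\sqrt{k\ln n/n}\cdot\binom{n-1}{k-1}2^{-(n-1)}$,
for which you give no derivation; ``a binomial estimate for the probability that the critical number of flips occurs'' is not a computation, and the constant $12/e^{26}$ cannot ``emerge from Stirling-type numerical estimates'' until that computation exists. Second, and more structurally, your failure mode (ii) --- an edge made bad by flips triggered through \emph{other} edges --- is intrinsically an event about pairs (or chains) of edges, so it cannot be bounded by a quantity independent of $|E|$ and then handled by a union bound over single edges. In the Radhakrishnan--Srinivasan analysis (and equally in the paper's ordering argument) this mode contributes a term of the form $|E|^2\cdot q$, and the threshold on $|E|$ arises precisely from requiring $|E|^2 q<1$; that is where the square-root improvement comes from. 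Your per-edge formulation hides this and, taken literally, is false: for fixed $n,k$ the probability that $e$ is spoiled by neighbouring edges grows with the number of edges meeting $e$. Finally, the flip rule itself is underspecified for $k\geqslant 2$ (flips can cascade, and a vertex may lie in several endangered edges demanding opposite colours), so even the event ``$e$ is bad'' is not well defined in your sketch. To repair the proposal you would need to (a) fix a non-cascading flip rule, (b) decompose the failure event into a single-edge part and a pair-of-edges part, and (c) carry out both integrals explicitly --- at which point you would essentially be reconstructing either the original RS argument or the paper's ordering argument.
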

		
		The estimate $(6)$ refines the previous results $(3)$ and $(4)$ when the growth of $k$ does not exceed $\frac{n^{1/2 - \delta}}{\sqrt{\ln n}}$ where $0 < \delta < 1/2.$
		
		\section{Proof of Theorem 1}
		In order to prove this theorem we need to show that any $n$-uniform hypergraph with at least $12/e^{26} \times \left(n/k\ln n\right)^{1/2} \times 2^{n-1}/\binom{n-1}{k-1}$ edges has property $B_k.$ Consider an $n$-uniform hypergraph $H = \left( V, E \right) $ with
		\begin{equation}
		|E| = m < \frac{12}{e^{26}} \left(\frac{n}{k\ln n} \right)^{1/2} \frac{2^{n-1}}{\binom{n-1}{k-1}}.
		\end{equation}
		
		\subsection{Criterea of property $B_k$}
		
		Let $G = \left(W, U \right)$ be an arbitrary hypergraph and let $\sigma$ be a numeration of its vertices. More precisely, let $\sigma$ be some bijective mapping from $W$ to $\left\lbrace 1,2,\ldots, |W|\right\rbrace.$ We say that a pair of edges $\left(A_1, A_2 \right)$ of $G$ forms an \textit{ordered} \textit{2-chain} with respect to numeration $\sigma$ if   $|A_1\cap A_2 | = 1$ and $\sigma(v) \leqslant \sigma(u)$ for all $v\in A_1, u \in A_2.$ Pluh\'ar in his work (see \cite{P}) suggested a lemma which connects together the existence of ordered $2$-chains in a hypergraph and property $B.$
		\begin{lemma}
			Let $G = \left(W,U\right)$ be an arbitrary $n$-uniform hypergraph. Then $G$ has property $B$ if and only if there is such a numeration $\sigma$ of $W$ that there are no ordered $2$-chains in $G.$
		\end{lemma}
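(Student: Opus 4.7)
The plan is to prove both implications separately, using the numeration $\sigma$ to translate between monochromatic edges and ordered $2$-chains.

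For the forward direction, I would assume $G$ has property $B$ with a proper $2$-coloring $c$ of $W$ into red and blue vertices, and define $\sigma$ by listing the red vertices first (in any order) followed by the blue ones. Suppose toward a contradiction that $(A_1, A_2)$ is an ordered $2$-chain with respect to $\sigma$, and let $\{w\} = A_1 \cap A_2$. The condition $\sigma(v) \leq \sigma(u)$ for every $v \in A_1$ and $u \in A_2$, together with $w \in A_1 \cap A_2$, forces $w$ to be simultaneously the $\sigma$-largest element of $A_1$ and the $\sigma$-smallest element of $A_2$. If $w$ is red, then every vertex of $A_1$ precedes $w$ in $\sigma$ and is therefore red as well, making $A_1$ monochromatic; if $w$ is blue, the same reasoning shows $A_2$ is monochromatic. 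Either conclusion contradicts property $B$.

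For the backward direction, I would assume $\sigma$ admits no ordered $2$-chain and construct a $2$-coloring of $W$ by declaring a vertex $w$ red if $w = \max_\sigma A$ for some edge $A \in U$, and blue otherwise. By definition each edge contains a red vertex, namely its own $\sigma$-maximum. Suppose some edge $A$ were monochromatic red; then in particular its $\sigma$-minimum $w := \min_\sigma A$ would be red, witnessed by some edge $B$ with $\max_\sigma B = w$. Every vertex of $A$ has $\sigma$-value at least $\sigma(w)$ and every vertex of $B$ has $\sigma$-value at most $\sigma(w)$, so any common vertex must equal $w$, giving $A \cap B = \{w\}$; the same inequalities show that $\sigma(v) \leq \sigma(u)$ for every $v \in B$ and $u \in A$, so $(B, A)$ is an ordered $2$-chain, contradicting the hypothesis on $\sigma$.

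The main technical point will be ensuring, in the backward direction, that the pair $(B, A)$ produced really is a chain of length one, that is, $|A \cap B| = 1$ rather than just nonempty. This is exactly where the coincidence $\max_\sigma B = \min_\sigma A$ is essential: it simultaneously pins the intersection down to the single vertex $w$ and delivers the required ordering between all vertices of $B$ and all vertices of $A$, so no further combinatorial analysis is needed.
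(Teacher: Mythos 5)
Your proof is correct and matches the argument the paper uses: the paper cites Lemma 1 to Pluh\'ar without proof but establishes the generalization (Lemma 2) in exactly this way, with the same ``color classes first, then numerate'' construction for necessity and a coloring by extremal vertices of edges for sufficiency (the paper colors the $\sigma$-first vertices of edges, you color the $\sigma$-maxima --- a mirror image). The one point worth a passing remark is the degenerate case $n=1$, where your edge $B$ with $\max_\sigma B=\min_\sigma A$ could coincide with $A$; for $n\geqslant 2$ your argument already guarantees $B\neq A$.
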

		
		Further we consider the criterion for having property $B_k$, which generalizes a property $B$-criterion and was formulated by Rozovskaya in \cite{R1}.
		
		For each edge $f \in U$, let $F_{\sigma}(f)$ denote the set of the first $k$ vertices of $f$ in the numeration $\sigma$ and let $L_{\sigma}(f)$ denote the set of the last $k$ vertices of $f$.
		
		\begin{lemma}
			Let $G = \left(W, U\right)$ be an arbitrary $n$-uniform hypergraph every edge of which contains at least $2k$ vertices. Then $G$ has property $B_k$ if and only if there is such a numeration of its vertices $\sigma$ that for each two edges $f$ and $s$, the following relation holds
			\begin{equation}
			L_{\sigma}(f) \cap F_{\sigma}(s) = \varnothing.
			\end{equation}
		\end{lemma}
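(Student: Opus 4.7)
The plan is to establish the two directions of the equivalence separately, in each case via a direct construction converting between a two-coloring and a compatible numeration.

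For the forward implication, suppose $G$ has property $B_k$, witnessed by a two-coloring with color classes $R$ and $B$ in which every edge meets each class in at least $k$ vertices. I would choose any bijection $\sigma : W \to \{1, \ldots, |W|\}$ assigning the first $|R|$ indices to the elements of $R$ and the last $|B|$ indices to the elements of $B$. For an arbitrary edge $f$, the condition $|f \cap R| \geqslant k$ forces the $k$ smallest $\sigma$-values inside $f$ to come from $R$, so $F_{\sigma}(f) \subseteq R$; symmetrically $L_{\sigma}(f) \subseteq B$. Then for any edges $f$ and $s$ we have $L_{\sigma}(f) \cap F_{\sigma}(s) \subseteq B \cap R = \varnothing$, which is $(8)$.

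For the reverse implication, suppose $\sigma$ is a numeration of $W$ for which $(8)$ holds for every pair of edges. I would form the two sets
$$
F := \bigcup_{s \in U} F_{\sigma}(s), \qquad L := \bigcup_{f \in U} L_{\sigma}(f).
$$
Applying $(8)$ to every ordered pair $(f,s) \in U \times U$ immediately gives $F \cap L = \varnothing$. I would then color every vertex of $F$ red, every vertex of $L$ blue, and the remaining vertices of $W$ in either color arbitrarily. Since $F$ and $L$ are disjoint this assignment is well-defined, and each edge $f$ contains both the $k$ red vertices of $F_{\sigma}(f)$ and the $k$ blue vertices of $L_{\sigma}(f)$, so $G$ has property $B_k$.

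The argument is essentially a translation between the two formulations, so no serious obstacle appears; the one conceptual point worth flagging is that the pointwise disjointness $L_{\sigma}(f) \cap F_{\sigma}(s) = \varnothing$ for every ordered pair of edges is exactly what is needed to promote the local, edge-by-edge constraints to the global disjointness $F \cap L = \varnothing$, and it is this global disjointness that makes the two-coloring consistent. The assumption that every edge has at least $2k$ vertices is used implicitly, both to ensure that $F_{\sigma}(f)$ and $L_{\sigma}(f)$ are well-defined disjoint $k$-subsets of $f$ and so that $(8)$ with $s=f$ is even possible.
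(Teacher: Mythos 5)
Your proof is correct and follows essentially the same route as the paper: necessity by enumerating one color class before the other, and sufficiency by coloring the union of the sets $F_{\sigma}(f)$ with one color and using $(8)$ to guarantee the sets $L_{\sigma}(f)$ receive the other. Your version is slightly more explicit about the global disjointness of $F$ and $L$ that makes the coloring well-defined, but the construction is the same.
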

\begin{proof}		
		\textbf{Necessity.} Suppose that $G$ have property $B_k$, i.e. there exists a coloring of its vertices with two colors such that each edge contains at least $k$ vertices of each color. We enumerate the vertices of a hypergraph in the following way: we assign first numbers to the vertices of the first color and after that we enumerate the vertices of the second color. It is easy to see that the condition $L_{\sigma}(f) \cap F_{\sigma}(s) = \varnothing$ holds true.
		
		\textbf{Sufficiency.} Consider the numeration $\sigma$ given by the theorem. We color the first $k$ vertices of every edge in the numeration $\sigma$ with the first color and all the remaining vertices with the second one. Since the condition $\left( 8\right)$ holds every edge contains at least $k$ vertices of each color. Therefore $G$ has property $B_k.$
\end{proof}

		\subsection{General idea of proof}
		
		Here we generalize the idea of Cherkashin and Kozik (see $\cite{CK}$). They suggested to enumerate $V$ in the following way. For every vertex $v$, let $X_v$ be an random variable with a uniform distribution on $\left( 0, 1\right).$ The value $\sigma(v)$ is defined as follows:
		$$
		\sigma(v) := \sum_{w \in V}I\left\lbrace X_w \leqslant X_v\right\rbrace.
		$$
		For any edge $A \in E$, we define
		$$
		max(A) = \max_{v \in A}X_v,
		$$
		$$
		min(A)  = \min_{v \in A}X_v.
		$$
		We say that $A$ is \textit{"dense"} if
		$$
		max(A) - min(A) \leqslant \frac{1 - p}{2},
		$$
		where $p=\ln n / n.$
		
		According to the criterion a hypergraph has property $B$ if there are no pairs of edges with one common vertex in the intersection which is the first in the one edge and the last in the other one. If we show that the sum of probabilities of the event that there are dense edges and the event that there exists a pair of edges intersecting in the above way (assuming that all the edges are not dense) is strictly less than $1,$ then with positive probability there exists such a numeration that the graph has property $B.$ It is easy to see that
		$$
		|E| \left[\left(\frac{1-p}{2}\right)^n  + n\left(\frac{1-p}{2} \right)^{n-1}\left(1 - \frac{1-p}{2}\right)  \right] + |E|^2\int_{\frac{1-p}{2}}^{\frac{1+p}{2}}x^{n-1}(1-x)^{n-1} dx< 1
		$$
		if
		$$
		|E| \geqslant c \cdot 2^n \cdot \sqrt{\frac{n}{\ln n}},
		$$
		where $c$ is some universal constant. Thus we obtain the best-known lower bound for $m(n).$
		
		We generalize this idea.
		
		Let $f \in E$ be an edge of $H.$ For each vertex $v \in f$, we introduce events $F_f(v)$ and $L_f(v)$. The event $F_f(v)$ occurs if the vertex $v$ is one of the first $k$ vertices of the edge $f$ and the event $L_f(v)$ occurs if $v$ is one of the last $k$ vertices of $f.$ For every two edges $f$ and $s$, such that $f \cap s \neq \varnothing$ and for any vertex $v_0 \in f \cap s$, denote
		\begin{equation}
		\mathcal{M}(f, s, v_0) = L_f(v_0) \cap F_s(v_0).
		\end{equation}
		Let $\mathcal{M}(f,s),$ be the union of the events $\mathcal{M}(f, s, v_0)$ over all vertices $v_0$ from the intersection of $f$ and $s$:
		$$
		\mathcal{M}(f,s) = \bigcup_{v_0 \in f \cap s}\mathcal{M}(f, s, v_0).
		$$
		
		Consider a numeration $\sigma$ of $V.$ Let $A \in E$ be an edge of $H.$ Without loss of generality we assume that $A = \left\lbrace 1,2,\ldots, n\right\rbrace.$ For any $v \in A$, we consider the random variable $X_v$ and arrange them all in the ascending order: $\left\lbrace X_{(1)}, \ldots, X_{(n)} \right\rbrace.$ Denote
		$$
		l(A) = X_{(n-k+1)},
		$$
		$$
		f(A) = X_{(k)}.
		$$
		
		We call an edge $A$ \textit{dense} if
		$$
		l(A) - f(A) \leqslant \frac{1-p}{2},
		$$
		where $p = 2k \ln n / n.$
		For each edge $A$, let $\mathcal{N}(A)$ denote the event that the edge $A$ is dense. Put
		$$
		\mathcal{R} = \bigcup_{A}\mathcal{N}(A).
		$$
		If we prove that the sum of probabilities of the events
		$\mathcal{R}$ and $\displaystyle \mathcal{F} := \bigcup_{f,s}\mathcal{M}(f,s)$ is strictly less than $1,$ assuming that all edges in $\mathcal{F}$ are not dense then with positive probability there exists such a numeration $\sigma$ of $V$ that for each $f$ and $s$, 
		$$
		L_{\sigma}(f) \bigcap F_{\sigma}(s) = \varnothing,
		$$
		which implies that $H$ has property $B_k$ according to the criterion.
		
		Our next step is to estimate the probabilities of the events $\mathcal{R}$ and $\mathcal{M}(f,s).$
		
		\subsection{The estimate for the probability of $\mathcal{R}$}
		
		\begin{multline}
		\mathbb{P}\left( \mathcal{R}\right)  = \mathbb{P} \left( \bigcup_{A} \mathcal{N}(A)\right) \leqslant |E|\max_{A}\mathbb{P}\left(\mathcal{N}(A) \right) = |E|\mathbb{P}\left(l(A) - f(A) \leqslant \frac{1-p}{2}\right) = \\
		=|E|\left[\mathbb{P}\left(l(A) \leqslant \frac{1-p}{2} \right) + \mathbb{P}\left( l(A) - f(A) \leqslant \frac{1-p}{2} , \text{ } l(A) > \frac{1-p}{2}\right)\right]= \\
		=|E|\text{ }\mathbb{P}\left(l(A) \leqslant \frac{1-p}{2} \right) + |E|\text{ }\mathbb{P}\left( l(A) - f(A) \leqslant \frac{1-p}{2} , \text{ }  l(A) > \frac{1-p}{2}\right).
		\end{multline}
		
		Then we estimate both summands of $(10)$ separately.
		
		\begin{multline}
		|E|\text{ }\mathbb{P}\left( l(A) \leqslant \frac{1-p}{2}\right) = |E|n \binom{n-1}{k-1}\int_{0}^{\frac{1-p}{2}}x^{n-k}(1-x)^{k-1}dx=|E|n\binom{n-1}{k-1} \times \\
		\left. \times\left(\frac{x^{n-k+1}\left( 1-x\right)  ^{k-1}}{n-k+1}\right|_0^{\frac{1-p}{2}}+\frac{k-1}{n-k+1}\int_{0}^{\frac{1-p}{2}}x^{n-k+1}(1-x)^{k-2}dx\right)=\\
		\left. = |E|n\binom{n-1}{k-1}\left(\frac{x^{n-k+1}\left( 1-x\right)  ^{k-1}}{n-k+1}\right|_0^{\frac{1-p}{2}}\left.+\frac{k-1}{n-k+1}\frac{x^{n-k+2}(1-x)^{k-2}}{n-k+2}\right|_0^{\frac{1-p}{2}}\right.+\\
		\left.+\frac{k-1}{n-k+1}\frac{k-2}{n-k+2}\int_{0}^{\frac{1-p}{2}}x^{n-k+2}(1-x)^{k-3}dx\right)=\ldots =\\
		=\left. |E|n\binom{n-1}{k-1}\left(\frac{x^{n-k+1}\left( 1-x\right)  ^{k-1}}{n-k+1}\right|_0^{\frac{1-p}{2}}\left.+\frac{k-1}{n-k+1}\frac{x^{n-k+2}(1-x)^{k-2}}{n-k+2}\right|_0^{\frac{1-p}{2}}\right.+\\
		\left.+\ldots+\left.\frac{k-1}{n-k+1}\frac{k-2}{n-k+2}\ldots\frac{1}{n-1}\frac{x^n}{n}\right|_0^{\frac{1-p}{2}}\right)=\\
		= |E|n\binom{n-1}{k-1}\left( \frac{\left( \frac{1-p}{2}\right)^{n-k+1}\left(1-\frac{1-p}{2} \right)^{k-1}}{n-k+1}+\frac{k-1}{n-k+1}\frac{\left(\frac{1-p}{2}\right)^{n-k+2}}{n-k+2}+\right.\\
		\left.+\ldots+\frac{k-1}{n-k+1}\frac{k-2}{n-k+2}\ldots\frac{1}{n-1}\frac{\left( \frac{1-p}{2}\right)^n}{n}\right).
		\end{multline}
		Note that
		\begin{multline}
		\nonumber
		\frac{\left(\frac{1-p}{2}\right)^{n-k+1}\left(1 - \frac{1-p}{2} \right)^{k-1}}{n-k+1}\geqslant \frac{k-1}{n-k+1}\frac{\left(\frac{1-p}{2}\right)^{n-k+2}\left(1 - \frac{1-p}{2} \right)^{k-2}}{n-k+2}\\
		\geqslant\ldots\geqslant \frac{k-1}{n-k+1}\frac{k-2}{n-k+2}\ldots\frac{1}{n-1}\frac{\left(\frac{1-p}{2} \right)^n}{n}.
		\end{multline}
		Then (11) can be estimated in the following way:
		\begin{multline}
		|E|n\binom{n-1}{k-1}k\frac{\left(\frac{1-p}{2}\right)^{n-k+1}\left(1 - \frac{1-p}{2}\right)^{k-1}}{n-k+1}<\frac{12}{2e^{26}}\sqrt{\frac{n}{k\ln n}}\frac{nk}{n-k+1}(1-p)^n\times\\\times\left(\frac{1+p}{1-p}\right)^{k-1}
		= \frac{6}{e^{26}}\sqrt{\frac{n}{k\ln n}}\frac{nk}{n-k+1}\left(1 - \frac{\ln n^{2k}}{n} \right)^n\left(1 + \frac{2p}{1-p} \right)^{k-1} \leqslant\\
		\leqslant \frac{6}{e^{26}}\sqrt{\frac{n}{k\ln n}}\frac{nk}{n - k + 1} \frac{1}{n^{2k}}\exp{\left( \frac{2p(k-1)}{1-p}\right) } = S_1.
		\end{multline}
		Here we used the inequalities $\left(1 - \frac{x}{n}\right)^n \leqslant \exp{\left( -x\right)} $ and $1+x \leqslant \exp{(x)}$ for $x > -1.$
		
	Since
		\begin{multline}
		\exp{\left(\frac{2p(k-1)}{1-p}\right)} < \exp{\left(\frac{2pk}{1-p}\right)} = \exp{\left( \frac{2\frac{2k\ln n}{n}k}{1 - \frac{2k\ln n}{n}}\right)} = \exp{\left(\frac{4k^2\ln n}{n - 2k\ln n} \right)} \leqslant\\
		\leqslant \exp{\left(\frac{4\left(\sqrt{\frac{n}{\ln n}} \right)^2 \ln n}{n - 2 \sqrt{\frac{n}{\ln n}}\ln n} \right)} = \exp{\left(\frac{4n}{n - 2 \sqrt{n \ln n}}\right)} = \exp{\left(\frac{4}{1 - 2\sqrt{\frac{\ln n}{n}}} \right)} \leqslant\\
		\leqslant \exp{\left(\frac{4}{1 - 2 \sqrt{\frac{\ln 30}{30}}} \right)} < e^{13}\quad \text{for all}\quad n \geqslant 30,
		\end{multline}
		we obtain
		\begin{multline}
		S_1 < \frac{6}{e^{13}}\sqrt{\frac{n}{k \ln n}}\frac{nk}{n - k + 1}\frac{1}{n^{2k}} = \frac{6}{e^{13}}\sqrt{\frac{n^3k^2}{k \ln n (n-k+1)^2n^{4k}}} \leqslant \\
		\leqslant \frac{6}{e^{13}}\sqrt{\frac{\sqrt{\frac{n}{\ln n}}}{\ln n (n - \sqrt{\frac{n}{\ln n}})^2n^{4k-3}}} \leqslant \frac{6}{e^{13}}\sqrt{\frac{1}{27^2(\ln n)^{3/2}n^{4k - 7/2}}}<\\
		<\frac{6}{27e^{13}}\sqrt{\frac{1}{n \ln n}} < \frac{1}{45e^{13}} \quad \text{for all} \quad n \geqslant 30.
		\end{multline}
		Then we estimate the second summand in $(10).$
		\begin{multline}
		|E|\mathbb{P}\left(l(A) - f(A) \leqslant \frac{1-p}{2}, \text{ } l(A) > \frac{1-p}{2}\right) = \\
		= |E|n \binom{n-1}{k-1} \int_{\frac{1-p}{2}}^{1}\sum_{i = 0}^{k - 1}\binom{n - k}{i}\left(x - \frac{1-p}{2} \right)^i\left(\frac{1-p}{2} \right)^{n - k - i}(1 - x)^{k-1}dx =\\
		=|E|n \binom{n-1}{k-1}\sum_{i = 0}^{k-1}\binom{n-k}{i}\left(\frac{1-p}{2} \right)^{n-k-i}\int_{\frac{1-p}{2}}^{1}\left( x - \frac{1-p}{2}\right)^i(1-x)^{k-1}dx =\\
		=|E|n \binom{n-1}{k-1}\sum_{i = 0}^{k-1}\binom{n-k}{i}\left(\frac{1-p}{2} \right)^{n-k-i}\frac{k-1}{i+1}\times\\ \times\int_{\frac{1-p}{2}}^{1}\left( x - \frac{1-p}{2}\right)^{i + 1}(1-x)^{k-2}dx  
		=\ldots=|E|n \binom{n-1}{k-1}\times \\ \times \sum_{i = 0}^{k-1}\binom{n-k}{i}\left(\frac{1-p}{2} \right)^{n-k-i}\times\frac{\left( k-1\right)!}{\left( i+1\right) \ldots \left(i + k -1 \right)}\int_{\frac{1-p}{2}}^{1}\left( x - \frac{1-p}{2}\right)^{i + k - 1}dx =\\
		=|E|n \binom{n-1}{k-1}\sum_{i = 0}^{k-1}\binom{n-k}{i}\left(\frac{1-p}{2} \right)^{n-k-i}\frac{(k-1)!i!}{(i+k)!}\left.\left(x - \frac{1-p}{2} \right)^{i+k}\right|_{\frac{1-p}{2}}^1 = \\
		=|E| n \binom{n-1}{k-1}\sum_{i = 0}^{k-1}\binom{n-k}{i}\frac{(k-1)!i!}{(i+k)!}\left(\frac{1-p}{2} \right)^{n-k-i}\times \\ \times\left(1 - \frac{1-p}{2}\right)^{i + k} = S_2.
		\end{multline}
		
		We will estimate the inner sum in $(15).$ For this purpose, we will find the maximum term in the sum. The ratio of term $i$ and term $i+1$ is equal to
		\begin{multline}
			\nonumber
			\frac{\binom{n-k}{i}\frac{i!}{(i+k)!}\left(\frac{1+p}{1-p} \right)^i}{\binom{n-k}{i+1}\frac{\left(i+1 \right)!}{\left(i+k+1 \right)!}\left(\frac{1+p}{1-p}\right)^{i+1}} = \frac{(i+k+1)(1-p)}{(n - k - i)(1+p)} < \frac{i+k+1}{n-k-i} \leqslant \\
			\leqslant \frac{2k}{n - 2k} = \epsilon(n,k) = \epsilon < 1.
		\end{multline}
		
		We used the condition $i \leqslant k - 1.$ Hence, the greatest term in the sum has the greatest number $i$ which is $k-1.$ Therefore, the sum can be estimated by the greatest term multiplied by $(1 - \epsilon)^{-1}.$ Then
		\begin{multline}
		S_2 < \left(1 - \epsilon \right) ^{-1} \frac{12}{2e^{26}}\sqrt{\frac{n}{k\ln n}}n\binom{n-k}{k-1}\frac{\left( k-1\right)!\left( k-1\right)!}{\left( 2k-1\right)!}\left( 1 - p\right)^n\left(\frac{1+p}{1-p} \right)^{2k-1} = \\
		= (1 - \epsilon)^{-1}\frac{6}{e^{26}}\sqrt{\frac{n}{k \ln n}}\frac{n\left( n - k \right)! \left( k - 1\right)!}{\left( n-2k+1\right)!\left( 2k - 1\right)!}\left(1 - \frac{\ln n^{2k}}{n}\right)^n\left(1 + \frac{2p}{1-p}\right)^{2k-1} \leqslant\\
		\leqslant (1 - \epsilon)^{-1}\frac{6}{e^{26}}\sqrt{\frac{n}{k \ln n}}n(n - k)^{k-1}\frac{\left(k-1\right)!}{\left(2k-1\right)!}\frac{1}{n^{2k}}\exp{\left(\frac{2p(2k-1)}{1-p}\right)}<\\
		<(1 - \epsilon)^{-1}\frac{6}{e^{26}}\sqrt{\frac{n}{k \ln n}}n^k\frac{(k-1)!}{(2k-1)!}\frac{1}{n^{2k}}e^{26} =\\
		=6(1-\epsilon)^{-1}\sqrt{\frac{n}{k \ln n}}\frac{(k-1)!}{(2k-1)!n^k}<\\
		<6(1-\epsilon)^{-1}\sqrt{\frac{1}{n \ln n}} < \frac{3}{5}(1 - \epsilon)^{-1} \quad \text{for all} \quad n \geqslant 30.
		\end{multline}
		Hence taking into account $(14)$ and $(16)$ we derive the final estimate for the probability of the event $\mathcal{R}.$
		\begin{equation}
		\mathbb{P}\left( \mathcal{R}\right) < \frac{1}{45e^{13}} + \frac{3}{5}(1 - \epsilon)^{-1}.
		\end{equation}
		
		\subsection{The estimate for the probability of $\mathcal{M}(f,s)$}
		Consider two arbitrary edges $f$ and $s.$ Recall that we assume that all edges are not dense. Let $h$ be the number of their common vertices and let $v_0$ be some vertex from $f \cap s.$ Denote
		$$
		A_1 = \left\lbrace v \in f \cap s: \sigma(v) < \sigma(v_0) \right\rbrace, \quad B_1 = \left\lbrace v \in f \cap s: \sigma(v) > \sigma(v_0) \right\rbrace,
		$$
		$$
		A_2 = \left\lbrace v \in f \setminus s: \sigma(v) > \sigma(v_0)\right\rbrace, \quad B_2 = \left\lbrace v \in s \setminus f: \sigma(v) < \sigma(v_0)  \right\rbrace.
		$$
		The event $\mathcal{M}(f,s,v_0)$ implies that
		$$
		|A_2|+|B_1| \leqslant k - 1,
		$$
		$$
		|A_1|+|B_2| \leqslant k - 1.
		$$
		By the definition $(9)$ the event $\mathcal{M}(f,s,v_0)$ is equivalent to an intersection of the following events
		\begin{equation}
		\mathcal{M}(f,s,v_0) = \left\lbrace |A_2| + |B_1| \leqslant k - 1 \right\rbrace \cap \left\lbrace |A_1| + |B_2| \leqslant k - 1\right\rbrace.
		\end{equation}
		We know that $|A_1| + |B_1| = h - 1.$ Consequently, it follows from $\mathcal{M}(f,s,v_0)$ that $h + |A_2|+|B_2| \leqslant 2k - 1.$ If $h \geqslant 2k,$ the latter inequality is impossible as well as the event $\mathcal{M}(f,s,v_0).$ Below we consider only the case of $h \leqslant 2k-1.$
		
		It follows from $(18)$ that
		\begin{equation}
		\mathcal{M}(f,s,v_0) = \bigcup_{\substack{i,j,t \in \mathbb{Z}_+, \\ t + j \leqslant k - 1,\text{ } t \leqslant h - 1 \\ i \leqslant k - h + t}}\left\lbrace|A_2| = i, |B_2| = j, |A_1| = t \right\rbrace.
		\end{equation}
		Recall that
		$$
		\mathcal{M}(f,s) = \bigcup_{v_0 \in f \cap s}\mathcal{M}(f,s,v_0).
		$$
		Then by $(19)$ we derive the estimate for the probability of $\mathcal{M}(f,s):$
		\begin{multline}
		\mathbb{P}\left(\mathcal{M}(f,s)\right) \leqslant h \sum_{t = \max(0, h - k)}^{\min(k-1, h-1)}\binom{h-1}{t}\sum_{i=0}^{k-h+t}\binom{n-h}{i}\sum_{j=0}^{k-1-t}\binom{n-h}{j} \times \\
		\times \int_{\frac{1-p}{2}}^{\frac{1+p}{2}}x^{n-h-i+t+j}(1-x)^{n-h-j+(h-1-t)+i}dx.
		\end{multline}
		
		The limits of the integral in the right-hand side of $(20)$ are from $\frac{1-p}{2}$ to $\frac{1+p}{2}.$ Suppose the opposite: let $X_{v_0} < \frac{1-p}{2}$ or let $X_{v_0}>\frac{1+p}{2}.$
		Consider the case of $X_{v_0} > \frac{1+p}{2}.$ Since $v_0 \in F_{\sigma}(s)$ it follows that $f(s) \geqslant X_{v_0} > \frac{1+p}{2}.$ \\
		Then
		$$
		l(s) - f(s) < l(s) - \frac{1+p}{2} < \frac{1 - p}{2},
		$$
		which contradicts the condition that $s$ is not dense. The case of $X_{v_0} < \frac{1-p}{2}$ is similar to the considered one.
		
		Further we estimate the expression in the right-hand side of $(20).$ First we estimate each integral in $(20).$
		
		\begin{multline}
		\nonumber
		\int_{\frac{1-p}{2}}^{\frac{1+p}{2}}x^{n-h-i+t+j}(1-x)^{n-h-j+(h-1-t)+i}dx = \\
		= \int_{-\frac{p}{2}}^{\frac{p}{2}}\left(\frac{1}{2} + y \right)^{n-h-i+t+j}\left(\frac{1}{2} - y\right)^{n-h-j+(h-1-t)+i}dy =\\
		=\left(\frac{1}{2}\right)^{2n-h-1}\int_{-\frac{p}{2}}^{\frac{p}{2}}\left( 1+2y\right)^{n-h-i+t+j}\left(1-2y \right)^{n-h-j+(h-1-t)+i}dy=\\
		= \left(\frac{1}{2}\right)^{2n-h-1}\int_{-\frac{p}{2}}^{\frac{p}{2}}(1 - 4y^2)^{n-h}(1+2y)^{-i+t+j}(1-2y)^{-j+(h-1-t) + i}dy \leqslant \\
		\leqslant \left(\frac{1}{2}\right)^{2n-h-1}\int_{-\frac{p}{2}}^{\frac{p}{2}}(1+2y)^{-i+t+j}(1-2y)^{-j+(h-1-t) + i}dy  =\\
		= \left(\frac{1}{2}\right)^{2n-h-1}\int_{-\frac{p}{2}}^{\frac{p}{2}}\left(\frac{1-2y}{1+2y}\right)^i \left(\frac{1+2y}{1-2y} \right)^t \left(\frac{1+2y}{1-2y} \right)^j(1-2y)^{h-1}dy \leqslant \\
		\leqslant \left(\frac{1}{2}\right)^{2n-h-1} \left(\frac{1+p}{1-p} \right)^{i+j+t}(1+p)^{h-1}\int_{-\frac{p}{2}}^{\frac{p}{2}}dy \leqslant \\ \leqslant
		p\left(\frac{1}{2}\right)^{2n-h-1}\left(\frac{1+p}{1-p} \right)^{2k}(1+p)^{2k}= p\left(\frac{1}{2}\right)^{2n-h-1}\left(1 + \frac{2p}{1-p} \right)^{2k}(1+p)^{2k} \leqslant \\
		\leqslant p\left(\frac{1}{2} \right)^{2n-h-1}e^{26}\exp{\left(\frac{4k^2\ln n}{n}\right)} \leqslant e^{30}p\left( \frac{1}{2}\right)^{2n-h-1}.
		\end{multline}
		Thus,
		\begin{multline}
		\mathbb{P}\left( \mathcal{M}(f,s)\right)  \leqslant e^{30}h \sum_{t = \max{(0, h-k)}}^{\min{(k-1, h - 1)}}\binom{h-1}{t}\sum_{i=0}^{k-h+t}\binom{n-h}{i}\times\\ \times\sum_{j=0}^{k-1-t}\binom{n-h}{j}p\left( \frac{1}{2}\right)^{2n-h-1}.
		\end{multline}
		
		We estimate the inner sum over $i$ in the right-hand side of $(21).$ We will find the maximum term. The ratio of the term $i$ and the term $i+1$ equals
		$$
		\frac{\binom{n-h}{i}}{\binom{n-h}{i+1}} = \frac{i+1}{n-h-i}\leqslant\frac{k}{n-h-k}\leqslant\frac{k}{n-3k}=\alpha(n,k)=\alpha<1.
		$$
		Here we used the inequalities $i+1 \leqslant k$ and $h \leqslant 2k.$ Hence, the term is maximized when $i$ is maximum namely when $i$ is equal to $k-h+t.$ The sum can be estimated by the maximum term multiplied by $(1-\alpha)^{-1}.$ We have
		
		\begin{multline}
		\mathbb{P}\left( \mathcal{M}(f,s)\right)  \leqslant e^{30}h \sum_{t = \max{(0, h-k)}}^{\min{(k-1, h - 1)}}\binom{h-1}{t}\sum_{j=0}^{k-1-t}\binom{n-h}{j}\times\\\times\binom{n-h}{k-h+t}(1-\alpha)^{-1}p\left(\frac{1}{2}\right)^{2n-h-1}.
		\end{multline}
		
		We estimate the inner sum over $j$ in the right-hand side of $(22)$ in the same way. The ratio of the term $j$ and the term $j+1$ equals
		$$
		\frac{\binom{n-h}{j}}{\binom{n-h}{j+1}} = \frac{j+1}{n-h-j}\leqslant\frac{k}{n-h-k}\leqslant\frac{k}{n-3k}=\alpha(n,k)=\alpha<1.
		$$
		These inequalities are similar to the previous ones for the sum over $i.$
		
		As before the term is maximum when $j=k-1-t.$ The sum can be estimated by the maximum term multiplied by $(1-\alpha)^{-1}.$ Therefore, we have
		\begin{multline}
		\mathbb{P}\left( \mathcal{M}(f,s)\right)  \leqslant e^{30}h \sum_{t = \max{(0, h-k)}}^{\min{(k-1, h - 1)}}\binom{h-1}{t}\binom{n-h}{k-1-t}\binom{n-h}{k-h+t}\times\\\times(1-\alpha)^{-2}p\left(\frac{1}{2}\right)^{2n-h-1}.
		\end{multline}
		
		Consider the general term in the sum of $(23)$ as the function of $h.$ The ratio of the term $h$ and the term $h+1$ equals
		\begin{multline}
		\frac{h}{h+1}\frac{\binom{h-1}{t}\binom{n-h}{k-1-t}\binom{n-h}{k-h+t}2^{h+1}}{\binom{h}{t}\binom{n-h-1}{k-1-t}\binom{n-h-1}{k-h-1+t}2^{h+2}} = \frac{(n-h)^2(h-t)}{2(h+1)(k-h+t)(n-h-k+t+1)}\geqslant\\\geqslant\frac{(n-2k)^2}{2k^2(n-k)} = \beta.
		\end{multline}
		The latter inequality requires clarification. Put $x = h - t,$ then 
		$$
		\frac{(n-h)^2(h-t)}{2(h+1)(k-h+t)(n-h-k+t+1)} = \frac{x}{x+t+1}\cdot\frac{(n-h)^2}{2(k-x)(n-x-k+1)}
		$$
		is an increasing function of $x.$ From the conditions
		$$
		x = h - t \geqslant 1, \quad t \leqslant k - 1, \quad h < 2k
		$$
		we obtain $(24).$
		
		It follows from the condition $(5)$ of the theorem that $\beta$ is strictly greater than $1.$ We have
		\begin{multline}
		\nonumber
		(n-2k)^2-2k^2(n-k) > n^2-4nk-2k^2n \geqslant n^2 - \frac{4n\sqrt{n}}{\sqrt{\ln n}} - \frac{2n^2}{\ln n} =\\ =\frac{n^2}{\ln n}\left(\ln n - 2 - 4\sqrt{\frac{\ln n}{n}} \right) > 0 ,
		\end{multline}
		since
		$$
		\ln n - 2 - 4 \sqrt{\frac{\ln n}{n}} \geqslant \ln30 - 2 - 4\sqrt{\frac{\ln 30}{30}} > 0 \quad \text{for all} \quad n\geqslant 30.
		$$
		Hence, every term of the right-hand side of $(23)$ is a decreasing function of $h.$ So for all $t$, due to the restriction $h \geqslant t + 1$ we have
		$$
		h\binom{h-1}{t}\binom{n-h}{k-1-t}\binom{n-h}{k-h+t}2^{h+1} \leqslant (t+1)\binom{n-t-1}{k-1-t}\binom{n-t-1}{k-1}2^{t+2}.
		$$
		The relation $(23)$ implies that
		\begin{equation}
		\mathbb{P}\left(\mathcal{M}(f,s) \right) \leqslant \frac{e^{30}}{(1-\alpha)^2}\sum_{t=\max(0, h-k)}^{\min(k-1,h-1)}(t+1)\binom{n-t-1}{k-1-t}\binom{n-t-1}{k-1}p\frac{2^{t+2}}{2^{2n}}.
		\end{equation}

		Clearly the greater number of positive terms makes the sum greater, thus
		$$
		\mathbb{P}\left(\mathcal{M}(f,s) \right) \leqslant e^{30}(1-\alpha)^{-2}\sum_{t=0}^{\min(k-1,h-1)}(t+1)\binom{n-t-1}{k-1-t}\binom{n-t-1}{k-1}p\frac{2^{t+2}}{2^{2n}}.
		$$
		
		We will find the maximal term in the sum over $t$ by considering the ratio of neighbouring terms as before. The ratio of the term $t$ and the term $t+1$ is equal to
		\begin{multline}
		\nonumber
		\frac{(t+1)\binom{n-t-1}{k-1-t}\binom{n-t-1}{k-1}2^{t+2}}{(t+2)\binom{n-t-2}{k-2-t}\binom{n-t-2}{k-1}2^{t+3}} = \frac{(t+1)(n-t-1)^2}{2(t+2)(k-1-t)(n-t-k)}\geqslant\\\geqslant \frac{(n-k)^2}{4k(n-k)} = \frac{n-k}{4k} = \gamma^{-1}>1.
		\end{multline}
		Here we used the inequalities $t\geqslant 0$ and $t + 1 \leqslant k.$
		
		Thus the maximum term corresponds to the case $t=0.$ The sum can be estimated by the maximum term multiplied by $(1 - \gamma)^{-1}.$ Finally we obtain this estimate for the probability of the event $\mathcal{M}(f,s):$
		\begin{equation}
		\mathbb{P}\left( \mathcal{M}(f,s)\right) \leqslant e^{30}(1 - \alpha)^{-2}(1-\gamma)^{-1}\binom{n-1}{k-1}^2p\left(\frac{1}{2} \right)^{2n-2}.
		\end{equation}
		
		\subsection{Auxiliary analysis}
		
		In this section we estimate the following factors:
		$$
		R(n,k) = (1 - \alpha)^{-2}(1 - \gamma)^{-1},
		$$
		$$
		T(n, k) = (1 - \epsilon)^{-1},
		$$
		which appear in the right-hand sides of $(17)$ and $(26)$ correspondingly. We appeal to the condition $(5)$ of the theorem.
		
		The following relations hold true:
		$$
		\alpha = \frac{k}{n-3k} \leqslant \frac{\sqrt{\frac{n}{\ln n}}}{n-3\sqrt{\frac{n}{\ln n}}} = \frac{1}{\sqrt{n\ln n} - 3} \leqslant \frac{1}{\sqrt{30\ln 30} - 3} \leqslant 0.15,
		$$
		$$
		\gamma = \frac{4k}{n-k} \leqslant \frac{4\sqrt{\frac{n}{\ln n}}}{n-\sqrt{\frac{n}{\ln n}}} = \frac{4}{\sqrt{n\ln n} - 1} \leqslant \frac{1}{4\sqrt{30\ln 30} - 1} \leqslant 0.5,
		$$
		$$
		\epsilon = \frac{2k}{n - 2k} \leqslant \frac{2\sqrt{\frac{n}{\ln n}}}{n-2\sqrt{\frac{n}{\ln n}}} = \frac{2}{\sqrt{n\ln n} - 2} \leqslant \frac{2}{\sqrt{30\ln 30} - 2} \leqslant 0.25,
		$$
		$$
		(1 - \alpha)^{-2} \leqslant (1 - 0.15)^{-2} \leqslant 1.5,
		$$
		$$
		(1 - \gamma)^{-1} \leqslant (1 - 0.5)^{-1} = 2,
		$$
		$$
		(1 - \epsilon)^{-1} \leqslant (1 - 0.25)^{-1} = 4/3.
		$$
		The above inequalities yield the final upper bounds for the factors $R(n,k)$ and $T(n,k):$
		\begin{equation}
		R(n,k) = (1 - \alpha)^{-2}(1 - \gamma)^{-1} \leqslant 1.5 \times 2 = 3,
		\end{equation}
		\begin{equation}
		T(n, k) = (1 - \epsilon)^{-1} \leqslant 4/3.
		\end{equation}
		
		\subsection{Completion of the proof}
		
		We complete the proof of the theorem. It follows from $(26)$ and $(27)$ that
		\begin{equation}
		\mathbb{P}\left(\mathcal{M}(f,s) \right) \leqslant 3e^{30}\binom{n-1}{k-1}^2 2^{2-2n}\frac{2k\ln n}{n}.
		\end{equation}
		Then the probability of $\mathcal{F}$ can be estimated as follows:
		\begin{multline}
		\mathbb{P}\left( \mathcal{F}\right) = \mathbb{P}\left(\bigcup_{f,s}\mathcal{M}(f,s)\right) \leqslant |E|^2 \max_{f,s}\mathbb{P}\left(\mathcal{M}(f,s)\right)\leqslant\\ \leqslant |E|^2 3e^{30}\binom{n-1}{k-1}^2 2^{2-2n} \frac{2k\ln n}{n} < \frac{864}{e^{22}}.
		\end{multline}
		The latter inequality follows from the condition $(7)$ on the hypergraph $H.$
		
		It follows from $(17)$ and $(28)$ that
		\begin{equation}
		\mathbb{P}\left( \mathcal{R}\right)  < \frac{1}{45e^{13}} + \frac{4}{5}.
		\end{equation}

		Thus from $(30)$ and $(31)$ we obtain the final estimate for the sum of the probabilities of the events $\mathcal{F}$ and $\mathcal{R}:$
		$$
		\mathbb{P}\left(\mathcal{F}\right) + \mathbb{P}\left(\mathcal{R}\right) < \frac{864}{e^{22}} + \frac{1}{45e^{13}} + \frac{4}{5} < 1.
		$$
		
		Let us sum up. We have shown that the sum of the probabilities of the events $\mathcal{R}$ and $\mathcal{F}$ is strictly less than $1,$ assuming that all of the edges in $\mathcal{F}$ are not dense. Hence with positive probability in a random numeration $\sigma$ of $V$ for each pair of edges $f$ and $s$, we have
		$$
		L_{\sigma}(f)\cap F_{\sigma}(s) = \varnothing.
		$$
		Therefore $H$ has the property $B_k$ according to the criterion. We have obtained the desired inequality since the hypergraph $H$ with the condition $(7)$ was chosen arbitrarily:
		$$
		m_k(n) \geqslant \frac{12}{e^{26}} \left(\frac{n}{k \ln n}\right)^{1/2}\frac{2^{n-1}}{\binom{n-1}{k-1}}.
		$$
		Theorem 1 is proved.

		\subsection{Corollary: property $B_{k, \varepsilon}$}
		We say that a subhypergraph $H'$ of a hypergraph $H = (V,E)$ is \emph{spanning} if the set of its vertices is $V$ and the set of its edges $E'$ is a subset of $E$. We say that a hypergraph $H=(V,E)$ has $\emph{property}$ $B_{k,\varepsilon}$ if there is a spanning subhypergraph $H'=(V, E')$ with property
		$B_k$ and with $|E'|\ge(1 - \varepsilon)|E|.$ It is easy to see that $\varepsilon$ is from $0$ to $1.$ Property $B_{k, \varepsilon}$ is equivalent to property  $B_k$ when $\varepsilon = 0.$ Shabanov (see $\cite{Sh2}$, $\cite{Sh3}$) showed that if
		$$
		\varepsilon \ge \left(\sum_{j=0}^{k-1}\binom{n}{j}\right)2^{1-n}
		$$
		then the property $B_{k,\varepsilon}$ is trivial that is an arbitrary  $n$-uniform hypergraph has property $B_{k, \varepsilon}$.
		
		The quantity $m_{k, \varepsilon}(n)$ is the minimum number of edges of $n$-uniform hypergraph which does not have property  $B_{k,\varepsilon}$.
		We have $m_{k,\varepsilon}(n)=m_k(n)$ for $\varepsilon = 0.$ It is easy to show that if $\varepsilon < \frac{1}{m_{k}(n)}$ then $m_{k,\varepsilon}(n)=m_k(n)$.
		
		It turns out that the quantity $ m_{k,\varepsilon}(n) $ makes sense only when
		$$
		\varepsilon \in \left(\frac{1}{m_{k}(n)},\left(\sum_{j=0}^{k-1}\binom{n}{j}\right) \cdot 2^{1-n}\right).
		$$
		Consider the inequality (2). Since $\psi(n)$ converges, it is bounded. Hence, we have
		\begin{equation}
		\varepsilon \in \left(R \cdot \left(\sum_{j=0}^{k-1}\binom{n}{j}\right) \cdot2^{1-n} \cdot n^{-2},
		\left(\sum_{j=0}^{k-1}\binom{n}{j}\right) \cdot 2^{1-n}\right),
		\end{equation}
		where $R$ is some constant.
		
		Shabanov (see $\cite{Sh2},$ $\cite{Sh3}$) proved that under such conditions there exist constants $ c, C $ such that for $k \le C\ln n$, the following inequality holds
		\begin{equation}
		m_{k, \varepsilon}(n) \ge c \cdot \varepsilon \cdot \frac{n}{\ln n} \cdot
		\frac{2^{2n}}{\left(\sum\limits_{j=0}^{k-1} \binom{n}{j}\right)^2} \cdot \frac{2^{-2k}e^{-k}}{2k-1}.
		\end{equation}
		Clearly $(33)$ is similar to $(3)$: it was produced by squaring $(3)$ and multiplying it by $\varepsilon.$
		In the note $\cite{Dem}$ the following result for this quantity was obtained:
		\begin{theorem}
			Let $n \ge 14, k \ge 2$ and let
			$$
			2k^2(n-k)\le(n-2k)^2.
			$$
			Then
			$$
			m_{k,\varepsilon}(n) \ge 0.0361 \cdot \varepsilon \cdot \sqrt{n} \cdot \frac{2^{2n-2}}{\binom{n-1}{k-1}^2}.
			$$
		\end{theorem}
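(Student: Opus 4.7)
The plan is to adapt the random-numeration scheme of Theorem~1 to the $B_{k,\varepsilon}$ setting via a deletion (Markov-type) argument. Draw $\sigma$ as in Section~3.2 from i.i.d.\ uniform $X_v$; call an edge $A$ \emph{dense} when $l(A)-f(A)\le(1-p)/2$ (for a $p$ still to be optimised) and call an ordered pair $(f,s)$ \emph{bad} when $L_\sigma(f)\cap F_\sigma(s)\ne\varnothing$. Let $D$ denote the number of dense edges plus the number of bad pairs both of whose edges are non-dense. Removing one edge per dense edge and one edge per such bad pair destroys every obstruction in Lemma~2, so the surviving spanning subhypergraph has property $B_k$. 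Hence it suffices to produce a single numeration with $D<\varepsilon|E|$, and by linearity of expectation it is enough to show $\mathbb{E}[D]<\varepsilon|E|$.

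By linearity, $\mathbb{E}[D]\le|E|\cdot\mathbb{P}(\mathcal{N}(A))+|E|^2\cdot\mathbb{P}(\mathcal{M}(f,s)\mid f,s\text{ not dense})$, and both single-event probabilities are controlled by the calculations of Sections~3.3 and 3.4 applied essentially verbatim. Crucially, the hypothesis $2k^2(n-k)\le(n-2k)^2$ of Theorem~2 is exactly the condition $\beta\ge 1$ from (24), which is the only place where a quantitative restriction on $k$ was really used in Section~3.4; this is why the same framework survives under the weaker assumption on $k$. The outcome is an estimate $\mathbb{E}[D]\le C_1|E|\,q_1(n,k,p)+C_2|E|^2\,q_2(n,k,p)$ with explicit $q_1,q_2$.

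To reach the $\sqrt{n}$ factor of the conclusion, rather than the $\sqrt{n/(k\ln n)}$ of Theorem~1, one reparametrises by taking $p$ of order $1/\sqrt{n}$ in place of $2k\ln n/n$. With this choice $q_1$ remains exponentially small, so the dense-edge contribution $|E|q_1$ is dominated by $\varepsilon|E|$ as soon as $\varepsilon\ge 1/m_k(n)$, while $q_2$ is of order $p\binom{n-1}{k-1}^2/2^{2n-2}$. Solving $C_2|E|q_2<\varepsilon$ then yields $|E|<c\,\varepsilon\sqrt{n}\cdot 2^{2n-2}/\binom{n-1}{k-1}^2$, which is the contrapositive of the claim.

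The hard part will be pinning down the explicit constant $0.0361$. This forces one to redo the auxiliary bounds of Section~3.5 at $n=14$ instead of $n=30$ and only under $2k^2(n-k)\le(n-2k)^2$: the upper bounds on $\alpha=k/(n-3k)$, $\gamma=4k/(n-k)$, $\epsilon=2k/(n-2k)$ loosen, and the multiplicative factors $e^{30}$, $(1-\alpha)^{-2}$, $(1-\gamma)^{-1}$, $(1-\epsilon)^{-1}$ inflate accordingly. Balancing these inflated constants against the free parameter $c$ in the choice $p=c/\sqrt{n}$ to squeeze out the precise value $0.0361$ is the computational bookkeeping where the cost of the weaker hypothesis appears; conceptually, everything else is parallel to Theorem~1.
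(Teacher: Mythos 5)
Your skeleton --- draw the random numeration, let $X$ count bad pairs and $Y$ count dense edges, show $\mathbb{E}(X+Y)<\varepsilon|E|$, then delete one edge per obstruction --- is the right deletion argument for property $B_{k,\varepsilon}$, and it is exactly how the paper handles its own Theorem~3. But Theorem~2 is not proved that way (the paper cites it from \cite{Dem} and explicitly describes it as the ``squared'' analogue of Rozovskaya's bound $(4)$, with $0.0361=(0.19)^2$). The proof behind it is a pure Pluh\'ar--Rozovskaya first-moment computation with \emph{no dense edges and no parameter $p$ at all}: one bounds $\mathbb{P}(\mathcal{M}(f,s))$ by the full-range integral $\int_0^1 x^{a}(1-x)^{b}\,dx$, whose dominant term $\binom{n-1}{k-1}^2\frac{((n-1)!)^2}{(2n-1)!}\sim c\,\binom{n-1}{k-1}^2 n^{-1/2}2^{2-2n}$ already supplies the $\sqrt{n}$; the hypothesis $2k^2(n-k)\le(n-2k)^2$ (your correct observation that it is the condition $\beta\ge 1$ of $(24)$) controls the sums over $h,t,i,j$, and $\mathbb{E}(X)\le|E|^2\max\mathbb{P}(\mathcal{M}(f,s))<\varepsilon|E|$ finishes. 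There is no $Y$ term to worry about.

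The genuine gap is that your reparametrization $p=c/\sqrt{n}$ does not make the dense-edge contribution small in the regime where Theorem~2 actually matters. The hypothesis $2k^2(n-k)\le(n-2k)^2$ allows $k$ up to about $\sqrt{n/2}$, and the dominant dense-edge term (the analogue of $S_2$ in $(15)$--$(16)$) is of order $\binom{n-1}{k-1}2^{-n}\cdot\binom{n-k}{k-1}\frac{((k-1)!)^2}{(2k-1)!}\,n\,e^{-pn}\approx \binom{n-1}{k-1}2^{-n}(n/k)^{k-1}e^{-pn}$, while $\varepsilon$ can be as small as roughly $\binom{n}{k-1}2^{1-n}n^{-2}$ by $(32)$. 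Requiring $\mathbb{P}(\mathcal{N}(A))\lesssim\varepsilon$ therefore forces $pn\gtrsim k\ln(n/k)$, i.e.\ $p\gtrsim \ln n/\sqrt{n}$ when $k\sim\sqrt{n/2}$, which is incompatible with $p=c/\sqrt{n}$ for a constant $c$; so your $q_1$ is not ``exponentially small'' relative to $\varepsilon$, it diverges. Pushing $p$ up to $\sim k\ln n/n$ repairs $q_1$ but degrades the pair term to $\varepsilon\sqrt{n}/\ln n$ --- this is precisely why the paper's Theorem~3 only carries the factor $n/\ln(n^{2k}\ln n)$ and improves on Theorem~2 only for $k\le n^{1/2-\delta}/\sqrt{\ln n}$. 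Relatedly, your claim that $2k^2(n-k)\le(n-2k)^2$ is the only place a quantitative restriction on $k$ enters Section~3.4 overlooks the bounds of type $\exp(4k^2\ln n/n)\le e^4$, which use $k\le\sqrt{n/\ln n}$ (these happen to be repaired by your smaller $p$, but the dense-edge obstruction above is not). To obtain the stated constant $0.0361$ and the clean $\sqrt{n}$ over the full range of $k$, you must drop the dense-edge machinery entirely and run the first-moment bound with the full Beta integral.
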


		Evidently this inequality is similar to $(4)$ as the inequality $(33)$ was similar to $(3).$ Its advantage is the same: the range of values of $k$ extends from $\ln n$ to the square root of $ n $.
		
		\begin{theorem}
			Let $k \geqslant 2$ and suppose that for all $n\ge n_1 = 30$,
			$$
			k \leqslant \sqrt{\frac{n}{\ln n}}.
			$$
			Put
			$$
			\mathcal{I} = R \cdot \left(\sum_{j=0}^{k-1}\binom{n}{j}\right) \cdot2^{-n-1} \cdot n^{-2},
			$$
			$$
			\mathcal{J} = \frac{2e^{30}}{9}\cdot\frac{1}{2^nn\ln n}.
			$$
			Let $N_1$ be such a natural number that for all $n\geqslant N_1$,
			$$
			\mathcal{I} \geqslant \mathcal{J}.
			$$
			Then for all $n \geqslant \max(n_1, N_1)$
			$$
			m_{k, \varepsilon}(n) \geqslant  \frac{1}{3e^{35}} \frac{2^{2n-2}}{\binom{n-1}{k-1}^2}\frac{n}{\ln \left( n^{2k}\ln n\right) } \frac{\varepsilon}{2}.
			$$
		\end{theorem}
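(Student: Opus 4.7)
I plan to adapt the Cherkashin--Kozik random-ordering argument used in the proof of Theorem 1, now applied via the alteration (first-moment) method rather than the ``sum of probabilities $<1$'' step. Let $H=(V,E)$ be an $n$-uniform hypergraph with $|E|$ strictly less than the bound of the statement. I would sample $\sigma$ exactly as in Section 3.2, but with the modified threshold $p':=\ln(n^{2k}\ln n)/n$ in place of $p=2k\ln n/n$; this $p'$ is chosen precisely so as to balance the two error terms that follow. Call an edge \emph{bad} if it is $p'$-dense, and call a pair $(f,s)$ \emph{bad} if $L_\sigma(f)\cap F_\sigma(s)\neq\varnothing$. After deleting all bad edges together with one edge from each bad pair, the remaining spanning subhypergraph satisfies the criterion of Lemma 2 and so has property $B_k$; it is therefore enough to show that the expected number of deleted edges is at most $\varepsilon|E|$, whereupon Markov's inequality produces a numeration $\sigma$ witnessing property $B_{k,\varepsilon}$.

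\textbf{First-moment split.} By linearity of expectation,
\[
\mathbb{E}[\#\{\text{deleted edges}\}]\ \leqslant\ |E|\,\mathbb{P}(\mathcal{N}(A))\ +\ |E|^2\max_{f,s}\mathbb{P}(\mathcal{M}(f,s)),
\]
and each summand should be at most $\varepsilon|E|/2$. For the pair term, I would rerun (20)--(26) verbatim with $p'$ in place of $p$: the monotonicity constants $\alpha,\gamma$ and the key inequality (24) producing $\beta>1$ depend only on $n$ and $k$ and are unchanged, while the exponential-factor estimates analogous to (13) absorb the small increase $p'n-pn=\ln\ln n$ and, together with the $R(n,k)\leqslant 3$ of (27), enlarge $e^{30}$ to at most $e^{35}$. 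This yields the analogue of (26), $\mathbb{P}(\mathcal{M}(f,s))\leqslant 3e^{35}\binom{n-1}{k-1}^2\,p'\cdot 2^{2-2n}$; plugging in the assumed bound on $|E|$ together with $p'=\ln(n^{2k}\ln n)/n$ then gives $|E|^2\max_{f,s}\mathbb{P}(\mathcal{M}(f,s))\leqslant\varepsilon|E|/2$ by direct substitution.

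\textbf{Dense edges and the role of $\mathcal{I}\geqslant\mathcal{J}$.} For the dense-edge term I would rerun (10)--(16) with $p'$ in place of $p$. The new ingredient is that $(1-p')^n\leqslant e^{-p'n}=(n^{2k}\ln n)^{-1}$, strictly smaller than the corresponding factor with the old $p$; this extra $(\ln n)^{-1}$, combined with the polynomial prefactors in (11) and the crude bound $\binom{n-1}{k-1}\leqslant n^{k-1}/(k-1)!$, delivers a bound of the form $\mathbb{P}(\mathcal{N}(A))\leqslant C\cdot(2^n n\ln n)^{-1}$, matching the shape of $\mathcal{J}=\frac{2e^{30}}{9}(2^n n\ln n)^{-1}$. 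The hypothesis $\mathcal{I}\geqslant\mathcal{J}$ of the theorem, together with the admissibility $\varepsilon>4\mathcal{I}$ from (32), forces $\varepsilon>4\mathcal{J}$, whence $|E|\,\mathbb{P}(\mathcal{N}(A))\leqslant\varepsilon|E|/2$. Summing the two halves yields $\mathbb{E}[\#\{\text{deleted}\}]\leqslant\varepsilon|E|$, and Markov's inequality produces a numeration $\sigma$ whose deletion set has size at most $\varepsilon|E|$; removing it leaves a spanning subhypergraph with at least $(1-\varepsilon)|E|$ edges and property $B_k$, as required.

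\textbf{Main obstacle.} The principal difficulty is the constant-tracking needed to rerun Theorem 1's proof with $p'$ in place of $p$ and reproduce exactly the numerical shape of the statement, including the precise constants $1/(3e^{35})$, $\mathcal{J}=\frac{2e^{30}}{9}(2^n n\ln n)^{-1}$, and the factor $\varepsilon/2$. The structural and monotonicity/maximum-term analyses of Sections 3.3--3.5 depend on $n,k$ only and go through unchanged, but each exponential factor of type $(1-p)^n\leqslant e^{-pn}$ or $\exp(2pk/(1-p))$ must be redone with $p'n=2k\ln n+\ln\ln n$; the main verification is that the extra $\ln\ln n$ is absorbed by a bounded factor and that the resulting constant stays below $e^{35}$ under the hypothesis $k\leqslant\sqrt{n/\ln n}$. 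Once these checks are in place, the argument concludes by Markov exactly as sketched.
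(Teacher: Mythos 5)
Your proposal is correct and follows essentially the same route as the paper: the paper's proof of Theorem 3 likewise takes $p=\ln(n^{2k}\ln n)/n$, bounds the expectations of the number of dense edges and of bad pairs (reusing the computations of Sections 3.3--3.6 with the new $p$, arriving at the same constants $\frac{2e^{30}}{9}(2^n n\ln n)^{-1}$ and $3e^{35}$), invokes $(32)$ together with $\mathcal{I}\geqslant\mathcal{J}$ to absorb the dense-edge term into $\varepsilon/2$ (split there as $\varepsilon/4+\varepsilon/4$), and concludes by the first-moment existence of a good numeration followed by deletion. The only cosmetic difference is your appeal to ``Markov's inequality'' where the paper simply uses that some outcome does not exceed the expectation.
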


		The estimate for $m_{k,\varepsilon}(n)$ obtained in Theorem 3 improves the result of Theorem 2 and $(33)$ when the growth of $k$ does not exceed $\frac{n^{1/2 - \delta}}{\sqrt{\ln n}}$ where $0 < \delta < 1/2.$
		$\\$
		
		\begin{proof} Let $X$ denote a random variable equal to a number of bad pairs of edges in $H$ in a random numeration $\sigma$ and let $Y$ be a random variable equal to a number of dense edges in $H$ in a random numeration $\sigma.$ If we show that the sum of expected values of $X$ and $Y$ is less than $\varepsilon|E|$ then we prove that $H$ has property $B_{k, \varepsilon}.$ Indeed, if $\mathbb{E}\left(X + Y\right) < \varepsilon|E|$ then there exists a numeration $\sigma$ such that $X(\sigma) + Y(\sigma)  < \varepsilon |E|.$ We take such $\sigma$ and remove an edge from every bad pair and remove all dense edges from the hypergraph as well. The obtained spanning hypergraph $H'$ has at least $|E|(1 - \varepsilon)$ edges. It has property $B_k$ according to the criterion.
		
		Put $p=\ln \left( n^{2k}\ln n \right) / n.$ We need the following auxiliary results:
		\begin{multline}
		\nonumber
		(1+p)^{2k} \leqslant \exp\left( \frac{2k\ln(n^{2k}\ln n)}{n}\right) = \exp\left(\frac{4k^2\ln n}{n} + \frac{2k\ln\ln n}{n} \right) =\\= e^4 \cdot \left(\frac{2\ln\ln n}{\sqrt{n\ln n}} \right) \leqslant e^4 \cdot \left(\frac{2\ln\ln 30}{\sqrt{30\ln 30}} \right) < e^5,
		\end{multline}
		
		\begin{multline}
		\left(\frac{1+p}{1-p}\right)^{k}=\left(1+\frac{2p}{1-p} \right)^k\leqslant \exp\left(\frac{2k\frac{2k\ln n+\ln\ln n}{n}}{1 -\frac{2k\ln n+\ln\ln n}{n}} \right) \leqslant\\ \leqslant\exp\left(\frac{4+\frac{2\ln\ln n}{\sqrt{n\ln n}}}{1 - 2\sqrt{\frac{\ln n}{n}} - \frac{\ln\ln n}{\sqrt{n\ln n}}} \right)\leqslant \exp\left(\frac{4+\frac{2\ln\ln 30}{\sqrt{30\ln 30}}}{1 - 2\sqrt{\frac{\ln 30}{30}} - \frac{\ln\ln 30}{\sqrt{30\ln 30}}} \right) < e^{15}.
		\end{multline}
		
		For the expected value of the random variable $Y$, we have
		\begin{multline}
		\mathbb{E}(Y) \leqslant |E|\max_{A}\mathbb{P}\left(\mathcal{N}(A) \right) = |E|\mathbb{P}\left(l(A) - f(A) \leqslant \frac{1-p}{2}\right) = \\
		=|E|\left[\mathbb{P}\left(l(A) \leqslant \frac{1-p}{2} \right) + \mathbb{P}\left( l(A) - f(A) \leqslant \frac{1-p}{2} ,\text{ }  l(A) > \frac{1-p}{2}\right)\right]= \\
		=|E|\text{ }\mathbb{P}\left(l(A) \leqslant \frac{1-p}{2} \right) + |E|\text{ }\mathbb{P}\left( l(A) - f(A) \leqslant \frac{1-p}{2} ,\text{ }  l(A) > \frac{1-p}{2}\right).
		\end{multline}
		We estimate separately each of two terms in the right-hand side
		of (35) as before.
		\begin{multline}
		|E|\text{ }\mathbb{P}\left( l(A) \leqslant \frac{1-p}{2}\right) = |E|n \binom{n-1}{k-1}\int_{0}^{\frac{1-p}{2}}x^{n-k}(1-x)^{k-1}dx=\\
		=\ldots=|E|n\binom{n-1}{k-1}k\frac{\left(\frac{1-p}{2}\right)^{n-k+1}\left(1 - \frac{1-p}{2}\right)^{k-1}}{n-k+1}=\\=|E|\frac{(n-1)!}{(k-1)!(n-k)!}\frac{nk}{n-k+1}\frac{(1-p)^n}{2^n}\left(\frac{1+p}{1-p}\right)^{k-1}\leqslant\\
		\leqslant |E|\frac{nk}{n-k+1}\frac{n^{k-1}}{2^n(k-1)!}\frac{1}{n^{2k}\ln n}\left(\frac{1+p}{1-p}\right)^{k}   <\\
		< |E|\frac{k}{k-1}\frac{1}{n-k}\frac{e^{15}}{n^k2^n\ln n} < |E|\frac{e^{15}}{n^k2^n\ln n}.
		\end{multline}
		
		The estimates in $(36)$ can be obtained in the same way as in $(11)$ and $(12).$ Further we estimate the second term in the right-hand side of $(35).$
		
		\begin{multline}
		|E|\mathbb{P}\left(l(A) - f(A) \leqslant \frac{1-p}{2},\text{ } l(A) > \frac{1-p}{2}\right) = \\
		= |E|n \binom{n-1}{k-1} \int_{\frac{1-p}{2}}^{1}\sum_{i = 0}^{k - 1}\binom{n - k}{i}\left(x - \frac{1-p}{2} \right)^i\left(\frac{1-p}{2} \right)^{n - k - i}(1 - x)^{k-1}dx =\\
		=\ldots=|E|n \binom{n-1}{k-1}\sum_{i = 0}^{k-1}\binom{n-k}{i}\left(\frac{1-p}{2} \right)^{n-k-i}\times \\ \times\frac{\left( k-1\right)!}{\left( i+1\right) \ldots \left(i + k -1 \right)}\int_{\frac{1-p}{2}}^{1}\left( x - \frac{1-p}{2}\right)^{i + k - 1}dx =\\
		= |E|n \binom{n-1}{k-1}\sum_{i = 0}^{k-1}\binom{n-k}{i}\frac{(k-1)!i!}{(i+k)!}\left(\frac{1-p}{2} \right)^{n-k-i}\left(1 - \frac{1-p}{2}\right)^{i + k} < \\
		< |E|n\binom{n-1}{k-1}^2 \frac{(k-1)!(k-1)!}{(2k-1)!}\frac{1}{2^n}(1-p)^n\left(\frac{1+p}{1-p}\right)^{2k-1}(1-\epsilon)^{-1} < \\
		< |E|\frac{4}{3}\frac{n^{2(k-1)+1}}{2^nn^{2k}\ln n (2k-1)!}\left(\frac{1+p}{1-p}\right)^{2k} \leqslant |E|\frac{2e^{30}}{9}\frac{1}{2^nn\ln n}.
		\end{multline}
		
		From $(32)$ we have
		$$
		\frac{\varepsilon}{4} \in \left(R \cdot \left(\sum_{j=0}^{k-1}\binom{n}{j}\right) \cdot2^{-n-1} \cdot n^{-2},
		\left(\sum_{j=0}^{k-1}\binom{n}{j}\right) \cdot 2^{-n-1}\right).
		$$
		It follows from this relation and from $(36), (37)$ that if $n > \max(n_1, N)$ then
		$$
		|E|\mathbb{P}\left( l(A) - f(A) \leqslant \frac{1-p}{2}, \text{ } l(A) > \frac{1-p}{2}\right) < |E|\frac{2e^{30}}{9}\frac{1}{2^nn\ln n} < \frac{\varepsilon}{4}|E|.
		$$
		$$
		|E|\text{ }\mathbb{P}\left( l(A) \leqslant \frac{1-p}{2}\right) < |E|\frac{e^{15}}{n^k2^n\ln n} < |E|\frac{2e^{30}}{9}\frac{1}{2^nn\ln n} < \frac{\varepsilon}{4}|E|.
		$$
		
		Recall that $p = \ln \left( n^{2k}\ln n \right) / n.$ Then the inequalities $(34)$ similarly to $(29)$ imply that
		$$
		\mathbb{P}\left(\mathcal{M}(f,s) \right) \leqslant 3e^{35}\binom{n-1}{k-1}^2 2^{2-2n}\frac{\ln \left( n^{2k}\ln n\right) }{n}.
		$$
		
		Then we estimate the expected value of the random variable $X$:
		\begin{multline}
		\nonumber
		\mathbb{E}(X) \leqslant |E|^2 \max_{f,s}\mathbb{P}\left(\mathcal{M}(f,s)\right) \leqslant |E|\left( \frac{1}{3e^{35}} \frac{2^{2n-2}}{\binom{n-1}{k-1}^2}\frac{n}{\ln \left( n^{2k}\ln n\right) } \frac{\varepsilon}{2}\right)\times\\ \times\left( 3e^{35}\binom{n-1}{k-1}^2 2^{2-2n} \frac{\ln \left( n^{2k}\ln n\right) }{n} \right) = \frac{\varepsilon}{2}|E|.
		\end{multline}
		
		So, we derive that
		$$
		\mathbb{E}\left(X + Y\right) < \left(\frac{\varepsilon}{4} + \frac{\varepsilon}{4} + \frac{\varepsilon}{2}\right)|E|  = \varepsilon|E|.
		$$
		
		Theorem 3 is proved.
\end{proof}
		
		\begin{remark}
			Since $m_{k,\varepsilon}(n) \geqslant m_k(n)$ it is easy to see that if
			$$
			\varepsilon \in \left(R \cdot \left(\sum_{j=0}^{k-1}\binom{n}{j}\right) \cdot2^{1-n} \cdot n^{-2},
			S\cdot \left(\sum_{j=0}^{k-1}\binom{n}{j}\right) \cdot 2^{1-n}\cdot n^{-1/2}\right),
			$$
			where $S$ is some constant, the estimate $(6)$ is the best for $m_{k,\varepsilon}(n).$
		\end{remark}
		
		\section{Hypergraphs with restrictions on an intersection of edges}
		Consider the following generalization of property $B_k$-problem.
We say that a hypergraph has \emph{property} $A_h$ if any two edges of the hypergraph do not intersect or have at least $h$ common vertices. We define $m_{k,h}(n)$ as the minimum possible number of edges of a hypergraph which has property $A_h$ but does not have property $B_k.$ For $h \geqslant 2k$, this problem does not make sense. Evidently any hypergraph which has property $A_{2k}$ has property $B_k,$ hence quantity $m_{k,h}(n)$ does not exist when $h\geqslant 2k.$ In nontrivial case the lower bound for this quantity was obtained by Shabanov (see $\cite{Sh2}$ ):
		\begin{equation}
		m_{k,h}(n) \geqslant c\left(\frac{3n}{2h\ln n} \right)^{h/3}\frac{2^{n-1}}{\binom{n}{k-1}},
		\end{equation}
		where $k = O(h\ln n)$ and $h < 2k.$
		
		Further, holds true
		$$
		m_{k,h}(n) \geqslant m_k(n).
		$$
		That is all the lower bounds for $m_k(n)$ hold for $m_{k,h}(n)$ as well as the lower bound $(6).$ Rozovskaya (see $[10]$) obtained the following result:
		\begin{equation}
		m_{k,h}(n) \geqslant \frac{0.19n^{1/4}2^{n-1}}{\sqrt{2^{h-k}(h-k+1)\binom{n-1}{k-1}\binom{n-1}{2k-1-h}}},
		\end{equation}
		provided $2k^2(n-k) < (n - 2k)^2.$ In comparison to the previous result $(38)$ the lower bound of Rozovskaya is valid for a larger range of values of $k.$ The next theorem yields a new lower bound for $m_{k,h}(n).$
		
		\begin{theorem}
			Let $n\geqslant 30,$ $k>2,$ $k < h < 2k$ and suppose $(5)$ holds. Then
			\begin{equation}
			m_{k,h}(n)\geqslant\frac{12}{e^{26}}\left(\frac{n}{k\ln n} \right)^{1/2} \frac{2^{n-1}}{\sqrt{2^{h-k}(h-k+1)\binom{n-1}{k-1}\binom{n-1}{2k-1-h}}}.
			\end{equation}
		\end{theorem}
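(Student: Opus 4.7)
Plan: I would follow the random-numeration framework from the proof of Theorem 1 (Sections 3.2--3.4) almost verbatim, using property $A_h$ to sharpen only the bound on $\mathbb{P}(\mathcal{M}(f,s))$. The bound (17) on $\mathbb{P}(\mathcal{R})$ depends solely on properties of individual edges, so it carries over without change, and the combinatorial bookkeeping of Section 3.6 can be reassembled once the new bound on $\mathbb{P}(\mathcal{M}(f,s))$ is in hand.

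Fix two edges $f$ and $s$ of $H$. Property $A_h$ restricts the intersection size to $\{0\}\cup\{h,h+1,\ldots,n\}$; empty intersections contribute nothing to $\mathcal{F}$, and the paragraph after (18) already kills intersection sizes at least $2k$. So the relevant range is $|f\cap s|\in\{h,\ldots,2k-1\}$, a sub-range of the $\{1,\ldots,2k-1\}$ used in Theorem 1. Writing $h'$ for this intersection size (to avoid clash with the theorem parameter $h$), the derivation (20)--(23) is unchanged. Condition (5) is the same as in Theorem 1, so the monotonicity estimate (24) still applies and shows that each summand of (23) is a decreasing function of $h'$. Consequently the worst case is $h'=h$, and I specialize to this value; note also that, because $h>k$, the boundary $\max(0,h'-k)$ reduces to $h-k$.

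The main analytical task is to bound the resulting inner sum $\Sigma(h):=\sum_{t=h-k}^{k-1}\binom{h-1}{t}\binom{n-h}{k-1-t}\binom{n-h}{k-h+t}$. Following the pattern of Section 3.4, I would compute the ratio $T_{t+1}/T_t$ of consecutive terms, note using $k<h<2k$ and (5) that it has a single change of sign, locate the maximizing index $t^{\ast}$, and bound $\Sigma(h)$ by $T_{t^{\ast}}$ times a geometric-series factor of the form $(1-q)^{-1}$ with $q<1$ on either side of $t^{\ast}$. The arithmetic is expected to produce an estimate of the form $\mathbb{P}(\mathcal{M}(f,s))\leq C\,e^{30}\,2^{h-k}(h-k+1)\binom{n-1}{k-1}\binom{n-1}{2k-1-h}\,p\,2^{2-2n}$ for an absolute constant $C$, which is precisely the factor needed to recover (40) after squaring and rearranging. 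The chief obstacle is this step: in Theorem 1 the analogous sum attained its extremum at the boundary $t=0$ and the geometric-series bound was one-sided, whereas here $t$ ranges over $[h-k,k-1]$ with $h-k>0$, so the extremum lives in the interior and a ratio of three linear factors must be analyzed to locate it and to justify the geometric bound on both sides simultaneously.

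Once the above estimate is in place, combining $\mathbb{P}(\mathcal{F})\leq|E|^{2}\max_{f,s}\mathbb{P}(\mathcal{M}(f,s))$ with the negation of (40) and the auxiliary bound $R(n,k)\leq 3$ from (27) delivers $\mathbb{P}(\mathcal{F})+\mathbb{P}(\mathcal{R})<1$ exactly as in Section 3.6. By Lemma 2 the hypergraph $H$ then has property $B_k$; since $H$ was an arbitrary $A_h$-hypergraph with strictly fewer edges than the right-hand side of (40), the theorem follows.
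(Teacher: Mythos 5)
Your overall architecture coincides with the paper's: reuse the random numeration and the events $\mathcal{R}$ and $\mathcal{M}(f,s)$ from Theorem~1, and exploit $|f\cap s|\geqslant h$ only to tighten the bound on $\mathbb{P}(\mathcal{M}(f,s))$ by shrinking the range of $t$. But there is a genuine gap in your treatment of $\mathcal{R}$. You assert that the bound $(17)$ ``carries over without change'' because it depends only on individual edges. It does not carry over: $(17)$ is obtained by multiplying a per-edge probability by $|E|$ and then invoking the edge-count hypothesis $(7)$ of Theorem~1, whereas here the hypothesis is $(41)$, whose right-hand side exceeds that of $(7)$ by a factor of order $\left(c_0 n/k\right)^{(h-k)/2}$ --- this is precisely why Theorem~4 improves on $(6)$. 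A hypergraph satisfying $(41)$ may therefore have far more edges than $(14)$ and $(16)$ tolerate, and those estimates do not apply verbatim. The paper repairs this in $(42)$--$(43)$: it isolates the factor $|E|\binom{n-1}{k-1}2^{-n}$, bounds it by $\frac{6}{e^{26}}\left(\frac{n}{k\ln n}\right)^{1/2}n^{k/2}$ using $(41)$ together with $k<h<2k$, and absorbs the extra $n^{k/2}$ into the factors $n^{-2k}$ already present in the estimates of $S_1$ and $S_2$, recovering the same numerical bound $\frac{1}{45e^{13}}+\frac{4}{5}$. Without this step (or a substitute) your argument is incomplete.

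A secondary point: the difficulty you anticipate in the $t$-sum is not there, and your chosen normalization would complicate the constants. The paper does not return to the sum $\Sigma(h)$ with the factors $\binom{h-1}{t}$ and $2^{h+1}$; it starts from $(25)$, in which each term has already been majorized by its value at the minimal admissible intersection size $h'=t+1$, and the very same ratio computation as in Theorem~1 shows that the terms $(t+1)\binom{n-t-1}{k-1-t}\binom{n-t-1}{k-1}2^{t+2}$ decrease in $t$ on all of $[0,k-1]$. Restricting to $t\in[h-k,k-1]$ therefore places the maximum at the left endpoint $t=h-k$, which yields the factor $(h-k+1)\binom{n-1}{2k-1-h}\binom{n-1}{k-1}2^{h-k+2}$ directly: there is no interior extremum and no need for a two-sided geometric bound. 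Your route, specializing $(23)$ to $h'=h$, produces the leading term $h\binom{h-1}{k-1}\binom{n-h}{2k-1-h}2^{h+1}$ instead, and matching the target expression then requires the additional comparison $h\binom{h-1}{k-1}2^{k-1}\leqslant C(h-k+1)\binom{n-1}{k-1}$, which holds under $(5)$ but costs a further constant factor and is not automatic; you would need to verify that the final numerical inequality $\mathbb{P}(\mathcal{F})+\mathbb{P}(\mathcal{R})<1$ survives it.
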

		It is easy to check that the ratio of the right-hand sides of $(40)$ and $(6)$ under the conditions of Theorem 4 is not less than $\left(c_0n/k\right)^{(h-k)/2}$ where $c_0$ is some universal constant. Thus, $(40)$ estimates quantity $m_{k,h}(n)$  better than $(6).$ Moreover the estimate of $m_{k,h}(n)$ from Theorem 4 refines $(39)$ when the growth of $k$ does not exceed $\frac{n^{1/2 - \delta}}{\sqrt{\ln n}}$ where $0 < \delta < 1/2.$
		
		\begin{proof} The proof is similar to the proof of Theorem 1. It suffices to show that an arbitrary $n$-uniform hypergraph  $H = \left( V, E\right)$ which has property $A_h$ and with
		\begin{equation}
		|E| < \frac{12}{e^{26}}\left(\frac{n}{k\ln n} \right)^{1/2}\frac{2^{n-1}}{\sqrt{2^{h-k}(h-k+1)\binom{n-1}{k-1}\binom{n-1}{2k-1-h}}},
		\end{equation}
		has property $B_k$ as well.
		
		Let $H = \left(V, E\right)$ be such a hypergraph. Consider the random numeration $\sigma$ which was constructed in the proof of Theorem 1 (see Section 3.2). We introduced the event $\mathcal{R}$ in the section where we proved Theorem 1. It implies that there exist dense edges in the hypergraph $H$ in the random numeration $\sigma.$ According to $(12), (13), (15)$ and $(16)$ under the conditions of Theorem 1 (that is the condition (5)) the following inequality holds:
		\begin{multline}
		\mathbb{P}\left(\mathcal{R}\right) < e^{13}|E|\binom{n-1}{k-1}2^{-n}\frac{nk}{n-k+1}\frac{1}{n^{2k}}+\\
		+e^{26}(1-\epsilon)^{-1}|E|\binom{n-1}{k-1}2^{-n}n^k\frac{(k-1)!}{(2k-1)!}\frac{1}{n^{2k}}.
		\end{multline}
		\
		From the condition $(41)$ on the number of edges we have
		\begin{multline}
		\nonumber
		|E|\binom{n-1}{k-1}2^{-n} < \frac{6}{e^{26}}\left(\frac{n}{k\ln n} \right)^{1/2}\sqrt{\frac{\binom{n-1}{k-1}}{2^{h-k}(h-k+1)\binom{n-1}{2k-1-h}}} < \\
		<\frac{6}{e^{26}}\left(\frac{n}{k\ln n}\right)^{1/2} \sqrt{\frac{(2k-1-h)!(n-2k+h)!}{(k-1)!(n-k)!}} < \\
		< \frac{6}{e^{26}}\left(\frac{n}{k\ln n}\right)^{1/2}\sqrt{\left(\frac{n-2k+h}{2k-h}\right)^{h-k}} < \\
		< \frac{6}{e^{26}}\left(\frac{n}{k\ln n}\right)^{1/2}n^{(h-k)/2} < \frac{6}{e^{26}}\left(\frac{n}{k\ln n}\right)^{1/2}n^{k/2}.
		\end{multline}
		
		We used the inequality $k < h < 2k.$ From here we have the final estimate for $(42):$
		\begin{multline}
		\mathbb{P}\left(\mathcal{R}\right) < \frac{6}{e^{13}}\left(\frac{n}{k\ln n} \right)^{1/2} \frac{nk}{n-k+1}\frac{1}{n^{3k/2}}+6(1-\epsilon)^{-1}\left(\frac{n}{k\ln n} \right)^{1/2}\frac{1}{n^{k/2}} <\\
		< \frac{6}{e^{13}}\frac{1}{n - \sqrt{\frac{n}{\ln n}}}\frac{1}{\sqrt{n\ln n}}+6(1- \epsilon)^{-1}\frac{1}{\sqrt{n\ln n}} <
		< \frac{6}{e^{13}}\frac{1}{27}\frac{1}{10} + 6(1-\epsilon)^{-1}\frac{1}{10} = \\=\frac{1}{45e^{13}} + \frac{3}{5}(1 - \epsilon)^{-1} < \frac{1}{45e^{13}}+ \frac{4}{5} \text{ äëÿ âñåõ }  n \geqslant 30.
		\end{multline}
		
		According to $(25)$ under the conditions of Theorem 1 (that is the condition $(5) $) the following inequality holds
		$$
		\mathbb{P}\left( \mathcal{M}(f,s)\right) \leqslant e^{30}(1-\alpha)^{-2} \sum_{t = \max(0, l - k)}^{\min(k-1,l-1)}(t+1)\binom{n-t-1}{k-1-t}\binom{n-t-1}{k-1}2^{t+2}p\left(\frac{1}{2}\right)^{2n},
		$$
		where $l = f \cap s.$ Under the conditions of Theorem 2 $\max(0, l-k) = l - k \geqslant h - k.$ Then we can increase the sum if we put $t$ from $h - k$ to $k - 1.$ We find the maximum term in the sum by considering the ratio of neighbouring terms. Similarly to Theorem 1 we deduce that the maximum term has the number $t = h - k$ and the sum can be estimated by the maximum term multiplied by $(1 - \gamma)^{-1}.$ Therefore
		\begin{multline}
		\nonumber
		\mathbb{P}\left(\mathcal{M}(f,s)\right) \leqslant e^{30}(1-\alpha)^{-2}(1-\gamma)^{-1}(h-k+1)\binom{n-h+k-1}{2k-1-h}\times\\
		\times\binom{n-h+k-1}{k-1}2^{h-k+2}p\left(\frac{1}{2} \right)^{2n}.
		\end{multline}
		
		Further, according to $(27)$ we derive the following estimate for the probability of the event $\mathcal{M}(f,s):$
		$$
		\mathbb{P}\left(\mathcal{M}(f,s)\right) \leqslant 6e^{30}2^{2-2n}(h-k+1)\binom{n-1}{2k-1-h}\binom{n-1}{k-1}2^{h-k}\frac{k\ln n}{n}.
		$$
		Finally the probability of the event $\mathcal{F}$ can be estimated in the following way:
		\begin{multline}
		\nonumber
		\mathbb{P}\left(\mathcal{F}\right) \leqslant \sum_{f,s}\mathbb{P}\left(\mathcal{M}(f,s)\right) <\\
		<  |E|^2 6e^{30}2^{2-2n}(h-k+1)\binom{n-1}{2k-1-h}\binom{n-1}{k-1}2^{h-k}\frac{k\ln n}{n} < \frac{864}{e^{22}}.
		\end{multline}
		It follows from the latter inequality and $(43)$ that
		$$
		\mathbb{P}\left( \mathcal{R}\right)  + \mathbb{P}\left( \mathcal{F}\right) < \frac{1}{45e^{13}} + \frac{4}{5} + \frac{864}{e^{22}} < 1.
		$$
		We have proven that with positive probability in the random numeration $\sigma$ for any edges $f$ and $s$,
		$$
		L_{\sigma}(f)\cap L_{\sigma}(s) = \varnothing.
		$$
		Therefore according to the criterion $H$ has property $B_k$ and since the choice of $H$ is arbitrary we obtain the desired inequality
		$$
		m_{k,h}(n) \geqslant \frac{12}{e^{26}}\left(\frac{n}{k \ln n} \right)^{1/2}\frac{2^{n-1}}{\sqrt{2^{h-k}(h-k+1)\binom{n-1}{k-1}\binom{n-1}{2k-1-h}}}.
		$$
		Theorem 4 is proved.
\end{proof}

		\section{Simple hypergraphs}
		
		Another generalization of the question of Erd{\H{o}}s--Hajnal is connected with simple hypergraphs. We say a hypergraph is  \emph{simple} if any two edges of it have no more than one vertex in the intersection. Let quantity $m^*(n)$ denote the minimum possible number of edges in a simple $n$-uniform hypergraph which does not have property $B.$ Initially the problem of finding $m^*(n)$ was posed by P. Erd{\H{o}}s and L. Lov{\'a}sz, (see $\cite{EL}$). They obtained the following results:
		$$
		c_1\frac{4^n}{n^3} \leqslant m^*(n) \leqslant c_2n^44^n.
		$$
		Further the lower bound was refined by Z. Szab{\'o} (see $\cite{Sz}$) and by A.V. Kostochka and M. Kumbhat (see $\cite{KK}$). They proved that for any $\varepsilon > 0$, there exists an integer $n_0(\varepsilon)$ such that for all $n > n_0(\varepsilon)$,
		$$
		m^*(n) \geqslant 4^n n^{-\varepsilon}.
		$$
		We consider the natural generalization of this problem. Define a quantity $m^*_k(n)$ which is equal to minimum possible number of edges in a simple $n$-uniform hypergraph that does not have property $B_k.$ The best lower bound for this quantity is due to Rozovskaya (see $\cite{R1}$):
		\begin{equation}
		m^*_k(n) \geqslant \frac{(0.19)^2}{8e}\frac{2^{2n-2}}{(n-1)^{3/2}\binom{n-2}{k-1}^2},
		\end{equation}
		where $2k^2(n-k-1) < (n - 2k - 1)^2.$ The following theorem yields a new lower bound for $m^*_k(n).$
		
		\begin{theorem}
			Let $k\geqslant 2$ and suppose that $(5)$ hold. Then for all $n\geqslant 30$,
			$$
			m^*_k(n) \geqslant \frac{25}{18e^{52}}\frac{2^{2(n-1)}}{k(n-1)\ln (n-1)\binom{n-2}{k-1}^2}.
			$$
		\end{theorem}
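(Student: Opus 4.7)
The plan is to mirror the proof of Theorem~1 and exploit the fact that, since $H$ is simple, every intersecting pair of edges satisfies $|f\cap s|=1$. I would introduce the Cherkashin--Kozik random numeration $\sigma$ of Section~3.2 with density parameter $p=2k\ln(n-1)/(n-1)$, chosen so that the factor $k(n-1)\ln(n-1)$ from the target denominator emerges naturally, and then argue $\mathbb{P}(\mathcal{R})+\mathbb{P}(\mathcal{F})<1$; by Lemma~2 this would yield property $B_{k}$ for $H$, and since $H$ is an arbitrary simple $n$-uniform hypergraph satisfying the given edge-count bound, the theorem would follow.

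The estimate of $\mathbb{P}(\mathcal{R})$ would follow Sections~3.3 and~3.6 essentially verbatim, substituting $\ln(n-1)$ for $\ln n$ and using the auxiliary bounds of Section~3.5 together with condition~(5). For $\mathbb{P}(\mathcal{M}(f,s))$ I would specialise~(25) to $h=1$: the sum over $t$ collapses to its single term $t=0$, giving
$$\mathbb{P}(\mathcal{M}(f,s))\leq \frac{4e^{30}}{(1-\alpha)^{2}}\binom{n-1}{k-1}^{2}p\cdot 4^{-n},$$
which via $\binom{n-1}{k-1}=\tfrac{n-1}{n-k}\binom{n-2}{k-1}$ can be expressed with $\binom{n-2}{k-1}^{2}$ in place of $\binom{n-1}{k-1}^{2}$, as required by the target.

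The main obstacle will be the aggregation step leading to $\mathbb{P}(\mathcal{F})$: a direct $|E|^{2}\max\mathbb{P}(\mathcal{M}(f,s))$ union bound only achieves the $2^{n}$-scaling of Theorem~1, whereas the theorem here demands $4^{n-1}$-scaling. To close this gap one must exploit simplicity at the pair-counting level. For each prospective common vertex $v_{0}$ the edges through $v_{0}$ form a packing of pairwise disjoint $(n-1)$-subsets of $V\setminus\{v_{0}\}$, so conditionally on $X_{v_{0}}$ the events $\{v_{0}\in L_{\sigma}(f)\}$ (and similarly $\{v_{0}\in F_{\sigma}(s)\}$) over $f\ni v_{0}$ are mutually independent. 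I would therefore rewrite $\mathbb{P}(\mathcal{F})$ as a sum over $v_{0}\in V$ of integrals
$$\int_{(1-p)/2}^{(1+p)/2}\bigl(1-(1-q(1-x))^{d_{v_{0}}}\bigr)\bigl(1-(1-q(x))^{d_{v_{0}}}\bigr)\,dx$$
with $q(x)=\mathbb{P}(\mathrm{Bin}(n-1,x)\leq k-1)$, apply the inequality $1-(1-q)^{d}\leq\min(dq,1)$, and then combine the handshake identity $\sum_{v}d_{v}=n|E|$ with the simple-hypergraph vertex bound $|E|\binom{n}{2}\leq\binom{|V|}{2}$ (since any two vertices lie in at most one edge) to control the resulting degree sums by $|E|$ and $n$ alone. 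Once $\mathbb{P}(\mathcal{F})$ is thus brought under the $4^{n-1}$ threshold, adding $\mathbb{P}(\mathcal{R})$ and invoking Lemma~2 completes the proof.
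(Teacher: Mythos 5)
Your plan has a fatal quantitative gap: no global first--moment argument of the kind you describe can reach a threshold of order $4^{n}/\binom{n-2}{k-1}^{2}$. Two concrete failures. First, already your estimate of $\mathbb{P}(\mathcal{R})$ breaks down: Sections 3.3 and 3.6 control $|E|\max_{A}\mathbb{P}(\mathcal{N}(A))$ only because of the hypothesis $(7)$, i.e.\ $|E|\lesssim 2^{n}\sqrt{n/(k\ln n)}/\binom{n-1}{k-1}$. With $|E|$ of order $4^{n}/\bigl(kn\ln n\binom{n-2}{k-1}^{2}\bigr)$ the same computation gives $|E|\max_{A}\mathbb{P}(\mathcal{N}(A))$ of order $2^{n}/\bigl(n^{2k+1}\ln n\binom{n-1}{k-1}\bigr)$, which tends to infinity because $n^{2k+1}\binom{n-1}{k-1}\leqslant n^{3k}\leqslant e^{3\sqrt{n\ln n}}=2^{o(n)}$ under $(5)$. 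Second, your vertex--aggregated bound for $\mathbb{P}(\mathcal{F})$ also blows up. Write $q\approx\binom{n-1}{k-1}2^{1-n}$ for the probability that $v_{0}$ lands among the first (or last) $k$ vertices of a fixed edge through it when $X_{v_{0}}$ is near $1/2$. The worst admissible degree profile has every relevant vertex of degree $d\approx 1/q$; this is compatible with simplicity, since the only constraint simplicity imposes, $\sum_{v}\binom{d_{v}}{2}\leqslant\binom{|E|}{2}$, is satisfied with room to spare (the ratio of the two sides is of order $kn^{2}\ln n\binom{n-1}{k-1}2^{-n}\ll 1$). For such a profile each relevant vertex contributes about $p=\Theta(k\ln n/n)$ to your sum, and there are about $n|E|q\approx 2^{n+1}/\bigl(k\ln n\binom{n-1}{k-1}\bigr)$ of them, so the total is again exponentially large; the identities $\sum_{v}d_{v}=n|E|$ and $|E|\binom{n}{2}\leqslant\binom{|V|}{2}$ do not exclude this profile. (A secondary flaw: conditional on $X_{v_{0}}$ the events $\bigcup_{f}\{v_{0}\in L_{\sigma}(f)\}$ and $\bigcup_{s}\{v_{0}\in F_{\sigma}(s)\}$ are functions of the same family of variables and are not independent, so the product form of your integrand needs justification.)

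The missing idea is a dichotomy implemented through the Lov\'asz Local Lemma rather than a union bound. The paper first proves Theorem 6: if every edge meets at most $D\leqslant\frac{5}{3e^{26}}\sqrt{n/(k\ln n)}\,2^{n-1}/\binom{n-1}{k-1}-1$ other edges, then $H$ has property $B_{k}$; this uses the symmetric local lemma (Theorem 7) applied to the events $\mathcal{M}(f,s)$ and $\mathcal{N}(A)$, whose dependency degrees are controlled by $D$, together with the single--event bounds $(29)$ and $(42)$ that you would have reproduced anyway. Theorem 5 then follows by counting, and this is where simplicity actually enters: if $H$ fails $B_{k}$, some edge meets more than $X(n)$ others, hence some vertex has degree at least $X(n)/n+1$; deleting a maximum--degree vertex from every edge gives an $(n-1)$-uniform hypergraph still without $B_{k}$, hence containing a vertex $v_{1}$ of degree at least $Y=X(n-1)/(n-1)+1$; restoring the deleted vertices on the edges through $v_{1}$ produces, by simplicity, at least $Y$ distinct vertices, each of degree at least $Y$, any two sharing at most one edge, whence $|E|\geqslant Y(Y+1)/2$. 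It is this squaring of the local--lemma threshold, after division by $n-1$, that produces the $4^{n-1}$ scaling and the factor $k(n-1)\ln(n-1)\binom{n-2}{k-1}^{2}$ in the denominator; your parameter choice $p=2k\ln(n-1)/(n-1)$ makes these factors appear cosmetically but cannot substitute for the dichotomy.
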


		Clearly the estimate obtained improves the previous result $(44)$ when the growth of $k$ does not exceed $\frac{n^{1/2 - \delta}}{\sqrt{\ln n}}$ where $0 < \delta < 1/2.$ In order to prove Theorem 5 we need the following theorem which produces a sufficient condition for a hypergraph to have property $B_k.$
		
		\begin{theorem}
			Let $H = \left( V,  E\right)$ be an $n$-uniform hypergraph, every edge of which does not intersect more then $D$ other edges. Let $n \geqslant 30,$ $k \geqslant 2$ and suppose that $(5)$ hold. If
			\begin{equation}
			D \leqslant \frac{5}{3e^{26}}\left(\frac{n}{k\ln n} \right)^{1/2}\frac{2^{n-1}}{\binom{n-1}{k-1}} - 1,
			\end{equation}
			then $H$ has property $B_k.$
		\end{theorem}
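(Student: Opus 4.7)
The plan is to adapt the probabilistic argument from the proof of Theorem~1 by replacing the global union bounds of Section~3.6 with an application of the Lov\'asz Local Lemma. I keep the random numeration $\sigma$ from Section~3.2 verbatim: independent uniform $[0,1]$ variables $X_v$ and their induced rank order, with $p = 2k\ln n/n$. The bad events also remain the same: the dense-edge events $\mathcal{N}(A)$ for every $A\in E$ and the intersection events $\mathcal{M}(f,s)$ for ordered pairs of distinct intersecting edges $f,s$. By Lemma~2, the simultaneous non-occurrence of all these bad events produces a numeration that witnesses property $B_k$, so the task reduces to proving that $\mathbb{P}\bigl(\bigcap \overline{\mathcal{N}(A)}\cap\bigcap\overline{\mathcal{M}(f,s)}\bigr)>0$.

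The new ingredient is the locality of these events. Since $\mathcal{N}(A)$ is measurable with respect to $\{X_v:v\in A\}$ and $\mathcal{M}(f,s)$ with respect to $\{X_v:v\in f\cup s\}$, a given bad event is mutually independent of every collection of other bad events whose underlying vertex sets are disjoint from its own. Using the assumption that every edge intersects at most $D$ other edges, I will check that the dependency neighbourhood of a fixed $\mathcal{M}(f,s)$ has size at most $2(D+1)+4(D+1)^2$ --- each of $f,s$ is met by at most $D+1$ edges, and each such edge appears in at most $D+1$ intersecting ordered pairs --- while the neighbourhood of $\mathcal{N}(A)$ is of the same $O((D+1)^2)$ order but smaller.

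I then apply the Lov\'asz Local Lemma (most cleanly in its asymmetric form, treating the two families of events with different weights) using the per-event probability estimates already derived in Theorem~1: the bound on $\mathbb{P}(\mathcal{N}(A))$ obtained by the computation behind $(11)$--$(14)$, together with $\mathbb{P}(\mathcal{M}(f,s))\leq 3e^{30}\binom{n-1}{k-1}^2 2^{2-2n}\,\frac{2k\ln n}{n}$ from $(29)$. These estimates remain usable because the LLL setup automatically enforces $\overline{\mathcal{N}(f)}\cap\overline{\mathcal{N}(s)}$, which is exactly the "edges are not dense" side-condition under which $(29)$ was proved. Because the $\mathcal{N}$-probabilities are of smaller order than the $\mathcal{M}$-probabilities in the regime $(5)$, the binding LLL inequality has the shape $e\,p_M\,c_0(D+1)^2\leq 1$ for an explicit small $c_0$ coming from the dependency count; solving this reproduces a bound of the form $D+1\leq c_1\sqrt{n/(k\ln n)}\cdot 2^{n-1}/\binom{n-1}{k-1}$, and the auxiliary estimates of Section~3.5 (the constants $(1-\alpha)^{-2}\leq 1.5$, $(1-\gamma)^{-1}\leq 2$, $(1-\epsilon)^{-1}\leq 4/3$) allow one to choose $c_1=5/(3e^{26})$.

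The main obstacle is numerical bookkeeping. I must (i) count the dependency neighbourhoods sharply enough that the constant $c_0$ in front of $(D+1)^2$ is explicit and small, (ii) verify, for the asymmetric LLL, that the $\mathcal{N}$-family does not eat into the budget allotted to the $\mathcal{M}$-family, and (iii) confirm that the slack built into Theorem~1's coefficient $12/e^{26}$ survives the substitution of $|E|^2$ by $c_0(D+1)^2$ together with the extra LLL factor of $e$, and still emits the announced constant $5/(3e^{26})$. The additive $-1$ in the advertised upper bound on $D$ is the familiar signature of a Local Lemma inequality of the form $e\,p\,(d+1)\leq 1$, which confirms that this is the intended route.
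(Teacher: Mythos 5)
Your route is the paper's route: the paper also keeps the random numeration and the events $\mathcal{N}(A)$, $\mathcal{M}(f,s)$ from the proof of Theorem~1, invokes a Local Lemma corollary (its Theorem~7, with the condition that the sum of probabilities over a dependency neighbourhood is at most $1/4$), and uses exactly the dependency counts you give ($4(D+1)^2+2(D+1)$ for $\mathcal{M}(f,s)$, half that for $\mathcal{N}(A)$) together with the probability bounds from $(11)$--$(16)$ and $(29)$. However, one load-bearing claim in your third paragraph is backwards: the $\mathcal{N}$-probabilities are not of smaller order than the $\mathcal{M}$-probabilities --- $\mathbb{P}(\mathcal{N}(A))$ lives on the $\binom{n-1}{k-1}2^{-n}$ scale while $\mathbb{P}(\mathcal{M}(f,s))$ lives on the $\binom{n-1}{k-1}^2 4^{-n}$ scale, so their ratio is exponentially large. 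Consequently the binding inequality is \emph{not} $e\,p_M\,c_0(D+1)^2\leqslant 1$: in the paper's verification the pair term $4(D+1)^2\,\mathbb{P}(\mathcal{M}(f,s))$ contributes only about $\tfrac{200}{3}e^{-22}$, whereas the dense-edge term $2(D+1)\,\mathbb{P}(\mathcal{N}(A))$ contributes about $\tfrac{2}{9}$ and is what forces the constant $5/(3e^{26})$ (mirroring Theorem~1, where $\mathbb{P}(\mathcal{R})<\tfrac{1}{45e^{13}}+\tfrac45$ is the near-critical bound). If you solved only the $\mathcal{M}$-inequality you would obtain an admissible $D$ larger by a factor of roughly $e^{10}$, under which the $\mathcal{N}$-side of the Local Lemma condition fails badly; so your checkpoint (ii) is not bookkeeping but the step that actually determines the theorem's constant, and the derivation must be reorganised around the linear-in-$(D+1)$ dense-edge term.
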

		
		\subsection{Proof of Theorem  6}

Let $H = \left( V, E\right) $ be an $n$-uniform hypergraph such that every of its edges intersects no more than $D$ other edges.
		
		Let $\sigma$ be a random numeration as in the proof of Theorem 1 (see Section 3.2). Recall that we introduced the events $\left\lbrace \mathcal{M}(f,s): f, s \in E, f \cap s \neq \varnothing\right\rbrace$ and $\left\lbrace \mathcal{N}\left(A\right): A \in E, A \right.$ --- \textit{dense}$\left. \right\rbrace$. If we prove that
		$$
		\mathbb{P}\left\lbrace \left(\bigcap_{f,s}\overline{\mathcal{M}(f,s)} \right) \bigcap \left(\bigcap_{A} \overline{\mathcal{N}(A)}\right) \right\rbrace > 0,
		$$
		then with positive probability for any two edges $f$ and $s$,
		$$
		L_{\sigma}(f) \cap F_{\sigma}(s) = \varnothing,
		$$
		which means that $H$ has property $B_k.$
		
		Further we need a supplementary theorem proof of which can be found in $[5].$
		
		\begin{theorem}
			Let $A_1,\ldots, A_N$ be events in an arbitrary probabilistic space, let $S_1, \ldots, S_N$ be the subsets of $\left\lbrace 1, \ldots, N \right\rbrace.$ If the following conditions hold
			\begin{enumerate}
				\item $A_i$ is independent of the algebra generated by the events $\left\lbrace A_j, j \notin S_i \right\rbrace,$
				\item $\forall i \in \left\lbrace 1, \ldots, N \right\rbrace$ holds $\sum_{j \in S_i\cup i} \mathbb{P}(A_j) \leqslant 1/4.$
			\end{enumerate}
		then
			$$
			\mathbb{P}\left(\bigcap_{i = 1}^{N}\overline{A_i} \right) > 0.
			$$
		\end{theorem}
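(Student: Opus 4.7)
Theorem 7 is a form of the asymmetric Lov\'asz Local Lemma with explicit constant $1/4$, and I would prove it by the classical induction argument. The crux is the auxiliary inequality
$$
\mathbb{P}\left(A_i \,\Big|\, \bigcap_{j \in T}\overline{A_j}\right) \leq 2\,\mathbb{P}(A_i),
$$
to be established by induction on $|T|$ for every subset $T \subseteq \{1,\dots,N\}$ and every index $i \notin T$ (the positivity of the conditioning event is verified along the way). The base case $T = \varnothing$ is immediate since the right-hand side is $2\,\mathbb{P}(A_i)$.

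For the inductive step, I would partition $T = T_1 \cup T_2$ with $T_1 = T \cap S_i$ and $T_2 = T \setminus S_i$, and expand
$$
\mathbb{P}\left(A_i \,\Big|\, \bigcap_{j \in T}\overline{A_j}\right) = \frac{\mathbb{P}\left(A_i \cap \bigcap_{j \in T_1}\overline{A_j} \,\Big|\, \bigcap_{j \in T_2}\overline{A_j}\right)}{\mathbb{P}\left(\bigcap_{j \in T_1}\overline{A_j} \,\Big|\, \bigcap_{j \in T_2}\overline{A_j}\right)}.
$$
The numerator is bounded above by $\mathbb{P}(A_i \mid \bigcap_{j \in T_2}\overline{A_j}) = \mathbb{P}(A_i)$, since $\bigcap_{j \in T_2}\overline{A_j}$ lies in the algebra generated by $\{A_j : j \notin S_i\}$ (because $T_2 \cap S_i = \varnothing$) and hypothesis~(1) then gives independence. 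The denominator I would bound from below by a union bound combined with the inductive hypothesis applied to $T_2$, valid because $|T_2| < |T|$:
$$
\mathbb{P}\left(\bigcap_{j \in T_1}\overline{A_j} \,\Big|\, \bigcap_{j \in T_2}\overline{A_j}\right) \geq 1 - \sum_{j \in T_1}\mathbb{P}\left(A_j \,\Big|\, \bigcap_{l \in T_2}\overline{A_l}\right) \geq 1 - 2\sum_{j \in T_1}\mathbb{P}(A_j) \geq \frac{1}{2},
$$
where the final inequality uses $T_1 \subseteq S_i$ together with hypothesis~(2), which gives $\sum_{j \in T_1}\mathbb{P}(A_j) \leq 1/4$. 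Combining these two estimates completes the inductive step.

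Once the auxiliary inequality is in hand, I would finish via the chain rule:
$$
\mathbb{P}\left(\bigcap_{i=1}^{N}\overline{A_i}\right) = \prod_{i=1}^{N}\mathbb{P}\left(\overline{A_i} \,\Big|\, \bigcap_{j<i}\overline{A_j}\right) \geq \prod_{i=1}^{N}\bigl(1 - 2\,\mathbb{P}(A_i)\bigr) > 0,
$$
each factor strictly positive since hypothesis~(2) forces $\mathbb{P}(A_i) \leq 1/4 < 1/2$. The only delicate point is the decomposition $T = T_1 \cup T_2$ separating the ``dependent'' indices (those in $S_i$) from the ``independent'' ones, together with the observation that the inductive hypothesis must be invoked on the strictly smaller set $T_2$ rather than on $T$ itself; past that splitting, the argument is essentially routine bookkeeping.
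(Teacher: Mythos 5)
The paper does not prove Theorem~7 at all: it is quoted as a known form of the Lov\'asz Local Lemma, with the proof delegated to the survey cited as reference [5]. So there is nothing in the paper to compare your argument against; what matters is whether your argument stands on its own, and it does. The induction claim $\mathbb{P}\bigl(A_i \mid \bigcap_{j\in T}\overline{A_j}\bigr)\leqslant 2\,\mathbb{P}(A_i)$ is the right invariant, the split $T=T_1\cup T_2$ with $T_1=T\cap S_i$ is the standard one, the numerator bound correctly uses that $\bigcap_{j\in T_2}\overline{A_j}$ lies in the algebra generated by $\{A_j: j\notin S_i\}$, and the denominator bound closes because $\sum_{j\in T_1}\mathbb{P}(A_j)\leqslant\sum_{j\in S_i\cup\{i\}}\mathbb{P}(A_j)\leqslant 1/4$, giving $1-2\cdot\tfrac14=\tfrac12$ and hence the factor $2$, consistently with the invariant. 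The one point you should state explicitly is the case $T_1=\varnothing$: there $T_2=T$, so the strict decrease $|T_2|<|T|$ on which you hang the inductive step fails. That case needs no induction at all --- the numerator is $\mathbb{P}\bigl(A_i\mid\bigcap_{j\in T_2}\overline{A_j}\bigr)=\mathbb{P}(A_i)$ directly from hypothesis~(1) and the denominator is an empty intersection of probability $1$ --- but as written your step is formally circular there. With that case separated out, and with the (already acknowledged) simultaneous verification that $\mathbb{P}\bigl(\bigcap_{j\in T}\overline{A_j}\bigr)>0$ so all conditional probabilities are defined, the proof is complete; the final chain-rule product $\prod_i\bigl(1-2\,\mathbb{P}(A_i)\bigr)\geqslant (1/2)^N>0$ is fine since hypothesis~(2) with $j=i$ gives $\mathbb{P}(A_i)\leqslant 1/4$.
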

		
		Due to the construction of the random numeration $\sigma$ the event $\mathcal{M}(f,s)$ is independent of the algebra generated by events
		$$
		\left\lbrace \mathcal{M}(g, u): g,u \in E, g \cap u \neq \varnothing, (g\cup u) \cap (f \cup s) = \varnothing \right\rbrace,
		$$
		$$
		\left\lbrace \mathcal{N}(Q): Q \in E,  Q \cap (f \cup s) = \varnothing \right\rbrace
		$$
		and the event $\mathcal{N}(A)$ is independent of the algebra generated by events
		$$
		\left\lbrace \mathcal{M}(g, u): g,u \in E, g \cap u \neq \varnothing, (g\cup u) \cap A = \varnothing \right\rbrace,
		$$
		$$
		\left\lbrace \mathcal{N}(Q): Q \in E, Q \cap A = \varnothing \right\rbrace .
		$$
		For the event $\mathcal{M}(f,s)$, consider
		\begin{equation*}
		S(f,s)=
		\begin{cases}
		\left\lbrace \mathcal{M}(g, u): g,u \in E, g \cap u \neq \varnothing, (g\cup u) \cap (f \cup s) \neq \varnothing \right\rbrace,\\
		\left\lbrace \mathcal{N}(Q): Q \in E, Q \cap (f \cup s) \neq \varnothing \right\rbrace ,
		\end{cases}
		\end{equation*}
		and for the event $\mathcal{N}(A)$, consider
		\begin{equation*}
		Z(A)=
		\begin{cases}
		\left\lbrace \mathcal{M}(g, u): g,u \in E, g \cap u \neq \varnothing, (g\cup u) \cap A \neq \varnothing \right\rbrace,\\
		\left\lbrace \mathcal{N}(Q): Q \in E, Q \cap A \neq \varnothing \right\rbrace.
		\end{cases}
		\end{equation*}
		Under the conditions of Theorem 6 the number of elements in $S(f,s)$ does not exceed $4(D+1)^2 + 2(D+1)$ and the number of elements in $Z(A)$ does not exceed $2(D+1)^2 + (D+1).$ From $(29)$ and $(42)$ we have
		$$
		2(D+1)^2\mathbb{P}\left(\mathcal{M}(f,s) \right) + (D+1)\mathbb{P}\left(\mathcal{N}(A)\right) < 4(D+1)^2\mathbb{P}\left(\mathcal{M}(f,s) \right) + 
		$$
		$$
		+2(D+1)\mathbb{P}\left(\mathcal{N}(A)\right)< 4(D+1)^2\left(3e^{30}\binom{n-1}{k-1}^2 2^{2-2n} \frac{2k \ln n}{n} \right) + 2(D+1) \times
		$$
		\begin{multline}
		\nonumber
		\times\left(\binom{n-1}{k-1}2^{-n}\frac{nk}{n-k+1}\frac{e^{13}}{n^{2k}} + (1-\epsilon)^{-1}\binom{n-1}{k-1}2^{-n}n^k\frac{(k-1)!}{(2k-1)!}\frac{e^{26}}{n^{2k}} \right) < \\
		< \frac{200}{3e^{22}} + \frac{1}{162e^{13}} + \frac{2}{9} < \frac{1}{4}.
		\end{multline}
		Here we used the condition $(45)$ on $D.$
		
		Thus, Theorem 7 implies that
		$$
		\mathbb{P}\left\lbrace \left(\bigcap_{f,s}\overline{\mathcal{M}(f,s)} \right) \bigcap \left(\bigcap_{A} \overline{\mathcal{N}(A)}\right) \right\rbrace > 0.
		$$
		Theorem 6 is proved.
		
		\subsection{Proof of Theorem 5} From Theorem 6 we know that if any edge of an $n$-uniform hypergraph intersects no more than $D$ other edges where
		$$
		D \leqslant \frac{5}{3e^{26}}\left(\frac{n}{k \ln n} \right)^{1/2} \frac{2^{n-1}}{\binom{n-1}{k-1}} - 1
		$$
		then it has property $B_k.$ Put
		$$
		X(n) = \frac{5}{3e^{26}}\left(\frac{n}{k \ln n} \right)^{1/2} \frac{2^{n-1}}{\binom{n-1}{k-1}} - 1.
		$$
		Then if $n$-uniform hypergraph does not have property $B_k$ then it has an edge $f_0$ which intersects more than $X(n)$ other edges. Hence there exists such a vertex $v_0 \in f_0$ that
		$$
		\text{deg } v_0 \geqslant \frac{X(n)}{n} + 1.
		$$
		
		Let $H = \left( V, E\right) $ be an $n$-uniform hypergraph that does not have property $B_k.$ Consider an $(n-1)$-uniform hypergraph $H_1 = \left( V, E\right)$ which is obtained from $H$ by deleting a vertex of a maximum degree from each edge. Clearly $H_1$ does not have property $B_k$ as $H$ does not have this property. Then there is a vertex $v_1 \in V$ such that
		$$
		\text{deg}_{H_1}\text{ }v_1 \geqslant \frac{X(n-1)}{n-1}+1.
		$$
		
		Consider all the edges that contain $v_1.$ We removed the vertex with the maximum degree from each such edge. We restore them. The initial hypergraph $H$ is simple hence all restored vertices are different. Let $u_1,\ldots,u_m$ be all these vertices and
		$$
		m \geqslant \text{deg}_{H_1}\text{ }v_1 \geqslant \frac{X(n-1)}{n-1}+1.
		$$
		The degree of every $u_j$ in $H$ is not smaller than  $\text{deg}_{H_1}\text{ }v_1$ and every $u_i$ and $u_j$ have no more than one common edge.
		
		Each restored vertex is contained in at least $Y = X(n-1)/(n-1)+1$ edges. Then the total number of edges in the hypergraph can be estimated as follows:
		$$
		Y + (Y-1) + (Y-2) + \ldots = \frac{Y(Y+1)}{2}.
		$$
		We substitute $Y$ in this expression and obtain
		$$
		|E| \geqslant \frac{Y(Y+1)}{2} \geqslant \frac{1}{2}\left(\frac{X(n-1)}{n-1}+1 \right)^2 \geqslant \frac{25}{18e^{52}}\frac{2^{2(n-1)}}{k(n-1)\ln(n-1)\binom{n-2}{k-1}^2}.
		$$
		Theorem 5 is proved.

	\end{document}